\documentclass{article}

\usepackage[T1]{fontenc}
\usepackage[utf8]{inputenc}
\usepackage[english]{babel}
\usepackage{geometry}
\usepackage{amsmath}
\usepackage{amsthm}
\usepackage{color}
\usepackage{enumitem}
\usepackage[linktoc=all]{hyperref}				

\usepackage{mathtools}
\usepackage{amssymb}
\usepackage{wasysym}
\usepackage{textcomp}
\usepackage{stmaryrd}
\usepackage{esint}
\usepackage{xspace}

\newcommand*{\R}{\mathbb{R}}
\newcommand*{\C}{\mathbb{C}}
\newcommand*{\buc}[1]{BUC^{#1}}
\renewcommand*{\d}{\mathrm{d}}

\newcommand*{\deq}{\overset{\scriptscriptstyle{\mathrm{def}}}{=}}

\newcommand*{\re}{\ensuremath\mathrm{e}}
\newcommand*{\Norm}[1]{\left\lVert#1\right\rVert}
\newcommand*{\norm}[1]{\lVert#1\rVert}
\newcommand*{\Absolute}[1]{\left\lvert#1\right\rvert}
\newcommand*{\absolute}[1]{\lvert#1\rvert}

\renewcommand{\leq}{\leqslant}
\renewcommand{\geq}{\geqslant}
\DeclareMathOperator*{\real}{\mathrm{Re}}

\newtheorem{theorem}{Theorem}[section]

\newtheorem{proposition}[theorem]{Proposition}
\newtheorem{lemma}[theorem]{Lemma}
\theoremstyle{definition}
\newtheorem{definition}[theorem]{Definition}
\newtheorem{assumption}[theorem]{Assumption}

\theoremstyle{remark}
\newtheorem{remark}[theorem]{Remark}

\title{Degenerated scalar balance laws: stability of the critical wave}
\author{L. \textsc{Garénaux}}
\date{\today}

\begin{document}
	
\maketitle

\begin{abstract}
We consider scalar balance laws whith degenerated growth function at one infinity: $g'(u_+) = 0$. It is the transition point between monostable and bistable waves. We find it admits smooth profiles arising from both settings, and show stability for the slowest of these waves, under a sign assumption on the flux. The presence of a characteristic point at infinity weaken the temporal decay: an exponential spatial localization loss is necessary to get algebraic temporal decay.

\vspace{1em}
\noindent\textbf{AMS Classification subject:} 35B35, 35B40, 35C07, 35F20, 35F25 \newline
\noindent\textbf{Keywords:} balance laws, heteroclinic waves, temporal stability, degenerated reaction, vanishing transport coefficient, evolution systems
\end{abstract}

\section{Introduction}

\subsection{Overview}

We consider scalar first order PDE of the form
\begin{equation}
	\label{e:main}
	u_t + (f(u))_x = g(u).
\end{equation}
Here, $g$ and $f$ are elements of $\mathcal{C}^3(\R, \R)$. The flux term $f$ creates transport effects in the equation, while the growth term $g$ allow for growth or decay of solutions. 

Such equation have been recently studied in \cite{Duchene_Rodrigues_20,Duchene_Rodrigues_22,Garenaux_Rodrigues_25}. Existence and stability of traveling waves have been investigated, with a complete classification of stability mechanisms of bistable \cite{Duchene_Rodrigues_20,Duchene_Rodrigues_22} and monostable \cite{Garenaux_Rodrigues_25} waves. Interestingly, the stability of these waves is ruled by three type of points: infinities, discontinuity points and characteristic points.

The presence of a growth term recall the study of reaction-diffusion equation, which share a lot of similarity in term of solution behavior. Notably, existence of bistable waves for a unique speed and existence of monostable waves for all speed larger than a critical speed. 

In the present contribution, we study the case of a degenerate reaction term, meaning $g'(u(x)) \to 0$ as $x \to +\infty$. It corresponds to combustion fronts \cite{Ghazaryan_09a} in the reaction-diffusion case. 

This setting is the transition point between bistable and monostable waves. As such, we find that it admits both types of waves. A one parameter family of super-critical waves (all speeds larger than $c_*$, algebraic localization) as well as a critical speed (speed equal to $c_*$, exponential localization). We note that in the limit $g'(u_+) \to 0$ with $g'(u_+) < 0$, the bistable speed converges to the critical speed we study, and that the characteristic point goes to $+\infty$.

We further focus on the critical wave, and show its stability with respect to localized perturbation. The main technical feature of the stability proof is that at the critical speed, the transport coefficient vanishes at $x \to +\infty$.  This transport degenaracy (caused by a characteristic point at infinity) leads to poor weight stabilization: An exponential weight loss is only converted into polynomial temporal decay. This fact as several consequences: First, the spectrum can only be stabilized using doubly exponential weights $x \mapsto \re^{-\kappa_1 \re^{\kappa_2 x}}$, which we did not investigate here. Second, conjugation by wave like localized weight is necessary to remove growth $g'(u(x)) > 0$ near $x\to +\infty$, and does not stabilize the spectrum.

We do not investigate the stability of supercritical waves. Preliminary computations suggest that they are linearly stabilized by algebraic weights: $x\mapsto \sqrt{x}$ or $x\mapsto x^{-1/2}$ depending on $u_+ - u_-$ sign. The former is unbounded and can not be studied at nonlinear level. The latter is less localized than the wave itself, allowing for a large bassin of attraction.

Stability proof is done through evolution systems. See the discussion at the beginning of Section \ref{s:nonlinear_equation}.

\subsection{Assumptions, theorem statements}

We first assume the existence of two equilibrium points.
\begin{assumption}
	\label{a:properties_g}
	There exists $u_-$ and $u_+$ in $\R$ such that the following holds true.
	\begin{itemize}
	\item $g(u_-) = 0$ and $g(u_+) = 0$.
	\item $g'(u_-) < 0$ and $g'(u_+) = 0$, $g''(u_+) \neq 0$.
	\item $\forall u \in (u_-, u_+)$, $g(u) \neq 0$.
	\end{itemize}
\end{assumption}

We then look for solutions to \eqref{e:main} that connect these two points and travel at constant speed. 
\begin{assumption}
	\label{a:speed}
	Let $c\in \R$ such that for all $p \in [u_-, u_+)$, $c>f'(p)$. Assume either of the following two points.
	\begin{enumerate}
		\item \label{i:super_critical} (super-critical) $c > f'(u_+)$
		\item \label{i:critical} (critical) $c = f'(u_+)$ and $f''(u_+) \neq 0$ with sign given by $\frac{g''(u_+)}{f''(u_+)} < 0$.
	\end{enumerate}
\end{assumption}

\begin{remark}
	The case where $c = f'(u_*)$ with $u_* \in (u_-, u_+)$ is not considered here. $u_*$ is then a characteristic point, implying that characteristics would be ingoing the unstable infinity. Such pattern can not be stabilized and we do not study it.
\end{remark}

\begin{theorem}
	For every $c\in \R$ such that \ref{a:speed} holds, there exists $p\in \mathcal{C}^\infty(\R, (u_-, u_+))$ monotonic, such that 
	\begin{equation*}
		u(t, x) = p(x - ct)
	\end{equation*}
	solves \eqref{e:main}, and such that 
	\begin{equation*}
		\lim_{-\infty} p = u_-
		\hspace{4em}
		\lim_{+\infty} p = u_+.
	\end{equation*}
	If $c > f'(u_+)$, there exists $p_0 \in \R$ such that 
	\begin{equation*}
		p(x) = u_+ + \frac{\absolute{p_0}}{1 + \absolute{\frac{p_0}{2} \frac{g''(u_+)}{f'(u_+) - c}} x} + \mathcal{O}_{x\to +\infty}(\frac{1}{1 + x^2}).
	\end{equation*}
	If $c = f'(u_+)$, there exists $C_p \in \R$ and $\varepsilon>0$ such that 
	\begin{equation*}
		p(x) = u_+ + C_p \re^{-\frac{g''(u_+)}{2f''(u_+)} x} + \mathcal{O}_{x \to +\infty}(\re^{-\left(\Absolute{\frac{g''(u_+)}{2f''(u_+)}} + \varepsilon\right) x})
	\end{equation*}
\end{theorem}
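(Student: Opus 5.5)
Plugging $u(t,x)=p(x-ct)$ into \eqref{e:main} turns the problem into the scalar ODE
\begin{equation*}
	(f'(p)-c)\,p' = g(p).
\end{equation*}
Assumption \ref{a:speed} gives $f'(p)-c<0$ for $p\in[u_-,u_+)$. On this interval the ODE is therefore regular and reads $p' = F(p) \deq g(p)/(f'(p)-c)$.

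\textbf{Existence and monotonicity.} By Assumption \ref{a:properties_g}, $g$ has a constant sign on $(u_-,u_+)$. I take $u_-<u_+$; the other ordering is symmetric. Since $g'(u_-)<0$, $g>0$ on $(u_-,u_+)$, so $F<0$ there, and $F$ is $\mathcal{C}^2$ on $[u_-,u_+)$. Solutions of $p'=F(p)$ started inside $(u_-,u_+)$ stay inside by uniqueness, since $u_\pm$ are equilibria of $F$ (or a limit of $F$, see below). They are strictly monotone and converge to equilibria at $\pm\infty$.

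The direction of motion is fixed by the sign of $F$, so one endpoint is $\alpha$-limit and the other $\omega$-limit. The assignment matching $\lim_{-\infty}p=u_-$ and $\lim_{+\infty}p=u_+$ comes from the orientation conventions. Concretely, I will check that $F'(u_-)=g'(u_-)/(f'(u_-)-c)>0$, so $u_-$ is a repeller and the flow leaves it. Global existence on $\R$ holds because $p$ stays in a bounded interval where $F$ is Lipschitz, away from a possible singular endpoint.

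Smoothness: the statement asks for $\mathcal{C}^\infty$. Since $f,g$ are only $\mathcal{C}^3$, I read this as $\mathcal{C}^3$ regularity of the profile, or assume more regularity of the data; bootstrapping $p'=F(p)$ gives as much regularity as $F$ has.

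\textbf{Behaviour at $+\infty$, super-critical case.} Here $f'(u_+)-c\neq0$, so $F$ extends $\mathcal{C}^2$ to $u_+$ with $F(u_+)=0=F'(u_+)$ and $F''(u_+)=g''(u_+)/(f'(u_+)-c)$. Writing $v=p-u_+$, we get $v'=\tfrac12F''(u_+)v^2+\mathcal{O}(v^3)$. I integrate the reciprocal:
\begin{equation*}
	\frac{1}{v^2}\Bigl(1+\mathcal{O}(v)\Bigr)\,\d v = \tfrac12F''(u_+)\,\d x .
\end{equation*}
This yields $1/v = -\tfrac12F''(u_+)x + K + \mathcal{O}(\log x)$. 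The logarithmic correction from the $v^3$ term is the delicate point: it would spoil an $\mathcal{O}(x^{-2})$ remainder.

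I would try first to show it is absent, using the fact that $F$ can be transformed (via $\d x = \d v / F(v)$ and a Laurent expansion of $1/F$) so that only $-\tfrac{F'''}{3F''^2}\log\absolute{v}$ appears. If it is present, the honest remainder is $\mathcal{O}(\log x/x^2)$. Fixing the translation parameter so that the constant $K$ equals $1/\absolute{p_0}$ gives the stated form.

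\textbf{Behaviour at $+\infty$, critical case.} Now both numerator and denominator of $F$ vanish at $u_+$:
\begin{equation*}
	g(p)=\tfrac12g''(u_+)v^2+\mathcal{O}(v^3),
	\qquad
	f'(p)-c=f''(u_+)v+\mathcal{O}(v^2).
\end{equation*}
Hence $F$ extends as a $\mathcal{C}^1$ function with $F(u_+)=0$ and $F'(u_+)=g''(u_+)/(2f''(u_+))<0$ by the sign assumption. So $u_+$ is a hyperbolic attractor, and linearization yields
\begin{equation*}
	v = C_p\,\re^{-\lambda x}(1+\mathcal{O}(\re^{-\varepsilon x})),
	\qquad
	\lambda=\absolute{g''(u_+)/(2f''(u_+))}.
\end{equation*}
The standard argument is to write $(\log\absolute{v})'=F'(u_+)+\mathcal{O}(v)$ and integrate, since $v$ decays exponentially.

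I also need to check that $f'(p)-c<0$ near $u_+^-$ is consistent with $\mathrm{sign}(f''(u_+))$; this is exactly what the hypothesis $g''/f''<0$ together with $g>0$ enforces. Note the stated exponent $-\frac{g''}{2f''}x$ has positive sign, so it should read $\frac{g''}{2f''}x$; I would state the decay rate $\lambda$ explicitly.

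\textbf{Main obstacle.} The main obstacle is the removable singularity of $F$ at $u_+$ in the critical case. It needs $\mathcal{C}^3$ data to get a $\mathcal{C}^1$ extension of $F$. The other delicate point is the precise remainder in the algebraic case.
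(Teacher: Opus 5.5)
Your route is the paper's. You reduce to $p'=F(p)$ with $F=g/(f'-c)$, trap the orbit in $(u_-,u_+)$ by uniqueness, and get monotone convergence to $u_\pm$. In the critical case you extend $F$ as a $\mathcal{C}^1$ map with $F'(u_+)=g''(u_+)/(2f''(u_+))=-\kappa<0$, where $\kappa=\absolute{g''(u_+)/(2f''(u_+))}$, and linearize. Your integration of $(\ln\absolute{v})'=F'(u_+)+\mathcal{O}(v)$ is what the paper compresses into ``as a consequence''. It even yields the remainder with $\varepsilon=\kappa$, which the paper's $\sim$ does not state. Your remarks are also correct: $\mathcal{C}^3$ data only give $\mathcal{C}^3$ regularity of $p$, and the exponent should read $\frac{g''(u_+)}{2f''(u_+)}x=-\kappa x$.

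One step, however, is wrong as written. For $u_-<u_+$, $g'(u_-)<0$ gives $g(u)\approx g'(u_-)(u-u_-)<0$ just right of $u_-$. So $g<0$ on $(u_-,u_+)$, hence $F>0$ and $p$ increases. Your ``$g>0$, $F<0$'' contradicts your own correct check $F'(u_-)>0$ (recall $F(u_-)=0$), and it would send the orbit from $u_+$ to $u_-$. It also makes your consistency check at $u_+$ fail: $g>0$ would force $g''(u_+)>0$, then $f''(u_+)<0$, then $f'(p)-c>0$ just below $u_+$. With the correct sign, $g''(u_+)<0$, and $c>f'(p)$ on $[u_-,u_+)$ by itself forces $f''(u_+)>0$. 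So the hypothesis $g''(u_+)/f''(u_+)<0$ is automatic. (The paper's Lemma~\ref{l:equilibria_ordering}, asserting $p'<0$ for $u_-\le u_+$, contains a slip of the same kind.)

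The paper's proof says nothing about the super-critical expansion, so there you go beyond it. Your suspicion is justified: do not try to prove the logarithm away, because it is generically present. With $\mathcal{C}^3$ data one has $F(u_++v)=Av^2+Bv^3+o(v^3)$, where
\begin{equation*}
	A=\frac{g''(u_+)}{2(f'(u_+)-c)},\qquad
	B=\frac{g'''(u_+)}{6(f'(u_+)-c)}-\frac{g''(u_+)f''(u_+)}{2(f'(u_+)-c)^2}.
\end{equation*}
Integrating $\d x=\d v/F$ gives $x=-\frac{1}{Av}-\frac{B}{A^2}\ln\absolute{v}+o(\ln\absolute{v})$. When $F'''(u_+)$ exists this coefficient is $-\frac{2F'''}{3F''^2}$, so yours is off by a factor $2$. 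Hence $1/v=-Ax+\frac{B}{A}\ln x+o(\ln x)$ and
\begin{equation*}
	p(x)-u_+=-\frac{1}{Ax}-\frac{B}{A^3}\,\frac{\ln x}{x^2}+o\Bigl(\frac{\ln x}{x^2}\Bigr).
\end{equation*}
Thus the $\mathcal{O}((1+x^2)^{-1})$ remainder holds only when $B=0$. No choice of $p_0$ helps, since $p_0$ only affects terms of order $x^{-2}$; your constant $K$ is likewise swallowed by the $\mathcal{O}(\log x)$ term. Note also that $-1/(Ax)<0$ when $u_-<u_+$, so the stated positive leading term $\absolute{p_0}/(1+\cdots)$ is correct only for $u_+<u_-$.

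With these corrections your proposal proves the existence part and the critical asymptotics. It also shows that the super-critical claim must be weakened to an $\mathcal{O}(\ln x/x^2)$ remainder.
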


Next, we investigate temporal stability of the critical wave. We need an additional assumption to ensure that $f$ does not contributes to growth of derivatives.

\begin{assumption}
	\label{a:sign_f_g}
	There exists $\eta>0$ and $x_0\in \R$ such that both following points hold.
	\begin{itemize}
		\item $\forall x\leq x_0$, $g'(p(x)) \leq - \eta + 2f''(p(x)) p'(x)$
		\item $\forall x \geq x_0$, $f''(p(x)) p'(x) > 0$
	\end{itemize}
\end{assumption}

\begin{remark}
	When $x_0$ is close enough to $+\infty$, the second point is automatically satisfied as will be seen in Lemma \ref{l:equilibria_ordering}. Unfortunately, we can not control the sign of $f''$ away from $u_+$. If $f''$ does not change sign on $(u_-, u_+)$, Assumption \ref{a:sign_f_g} is satisfied.
\end{remark}

\begin{definition}
	Let $\rho: \R \to [1, +\infty)$ be non-decreasing. If $\lim_{x\to +\infty} \rho(x) = +\infty$, we say that $\rho$ is a non-decreasing weight. If in addition, there exists positive constants $C$ and $\kappa$ such that for all $x \geq 0$, 
	\begin{equation*}
		\rho(x) \leq C \re^{\kappa x}
	\end{equation*} 
	then we say that $\rho$ is a weight controlled by $\re^{\kappa x}$, or that $\rho$ is sub-exponential with constants $(C, \kappa)$.
\end{definition}

\begin{theorem}
	\label{t:stability}
	Assume assumptions \ref{a:properties_g}, \ref{a:speed} and \ref{a:sign_f_g}. Then for $c = f'(u_+)$, the profile $p$ is asymptotically stable in exponentially weighted norms, against algebraically localized initial data: 
	There exists $C>0$, $\delta>0$ and a weight function $\omega: \R \to (0,1]$ satisfying
	\begin{equation*}
		\omega(x) \sim_{x\to +\infty} C_\omega \re^{-\Absolute{\frac{g''(u_+)}{2 f''(u_+)}} y}
	\end{equation*}
	such that for every $w_0 \in \buc{1}(\R)$, for all $\alpha>0$, if $\norm{w_0} \leq \delta$ then there exists a unique solution to \eqref{e:main} with initial data $u_0 = p + \omega w_0$ and it satisfies for all $t\geq 0$
	\begin{equation*}
		\Norm{\frac{u(t) - p(x - ct)}{\omega}} \leq \frac{C^2}{\big(C + \ln(1 + \frac{1}{C} t)\big)^\alpha} \left(\norm{(1 + \cdot)^\alpha w_0}_{W^{1, \infty}([0,+\infty))} + \norm{w_0}_{W^{1, \infty}((-\infty, 0])}\right)
	\end{equation*}
\end{theorem}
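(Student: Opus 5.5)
We work in the moving frame $y = x - ct$, write $u(t,x) = p(y) + \omega(y) v(t,y)$, and study the equation satisfied by $v$. Substituting into \eqref{e:main} and using the profile equation $(f'(p)-c)p' = g(p)$ gives
\begin{equation*}
	v_t + (f'(p) - c) v_y = \Big(g'(p) - \big(f'(p)-c\big)\tfrac{\omega'}{\omega} - f''(p)p'\Big) v + \mathcal{N}(v),
\end{equation*}
where $\mathcal{N}$ is at least quadratic in $v$ and $v_y$, and carries powers of $\omega \leq 1$.

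\textbf{Choice of the weight.} Take $\omega$ wave-like: $\omega = p'/p'(0)$ up to normalization, or more concretely $\omega \equiv 1$ on $y\leq x_0$ and $\omega \sim C_\omega \re^{-\absolute{g''(u_+)/(2f''(u_+))} y}$ near $+\infty$. By the Theorem on existence of profiles, $p'$ has exactly this exponential rate. The coefficient $\omega'/\omega$ is chosen so that the zeroth-order coefficient
\begin{equation*}
	a(y) \deq g'(p) - (f'(p)-c)\tfrac{\omega'}{\omega} - f''(p)p'
\end{equation*}
satisfies $a \leq -f''(p)p' \leq 0$ for $y \geq x_0$. With $\omega = p'$ one has $(f'(p)-c)\tfrac{p''}{p'} = g'(p) - f''(p)p'$, hence $a = 0$ exactly, so a slight modification leaves $a\le 0$. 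For $y\leq x_0$, Assumption \ref{a:sign_f_g} gives $a\leq -\eta$.

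\textbf{Derivative estimate.} Differentiating, $v_y$ satisfies a transport equation with coefficient $a - f''(p)p'$. This is where the factor $2f''p'$ of Assumption \ref{a:sign_f_g} enters, and the second point of that assumption gives $a - f''(p)p' \leq 0$ on $[x_0,\infty)$. So the linear evolution system is contractive in $W^{1,\infty}$, uniformly in time.

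\textbf{Decay mechanism.} Decay comes only from the region $y\leq x_0$, where damping is $\geq\eta$, or from transport. Characteristics $\dot y = f'(p(y)) - c$ move leftward, since $c > f'(p)$ on $[u_-,u_+)$. Near $+\infty$, however, $f'(p)-c \approx f''(u_+)(p-u_+) \sim -K\re^{-\kappa y}$. Hence a characteristic starting at $y_0$ needs time $T(y_0) \approx \re^{\kappa y_0}/(\kappa K)$ to reach $x_0$, after which it is exponentially damped. Inverting, at time $t$ only the data at $y_0 \gtrsim \kappa^{-1}\ln(1+t/C)$ remains undamped. Data localized as $\absolute{w_0(y_0)} \leq (1+y_0)^{-\alpha}\norm{(1+\cdot)^\alpha w_0}$ therefore yields the bound $(C+\ln(1+t/C))^{-\alpha}$. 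The same argument applies to $v_y$ through its damped equation, using the Duhamel formula along characteristics.

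\textbf{Nonlinear closure.} A Duhamel/bootstrap argument on the evolution system closes the nonlinear problem. The nonlinear terms are quadratic and multiplied by $\omega \le 1$; near $+\infty$ they are exponentially small, $\omega \sim \re^{-\kappa y}$, which beats the slow logarithmic decay rate. Local existence and uniqueness in $\buc{1}$ follow from the method of characteristics, and smallness $\delta$ propagates.

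\textbf{Main obstacle.} The hardest step is closing the nonlinear integral. The time integral $\int_0^t (\ldots)(\ln s)^{-2\alpha}\,ds$ is not integrable. I would therefore exploit that the nonlinear source at position $y$ carries a factor $\omega(y)\sim\re^{-\kappa y}$, and that along a characteristic $\omega$ is comparable to $1/(1+ t_{\text{remaining}})$. This converts the source into an integrable, damped contribution. Bookkeeping the weights $(1+y)^\alpha$ along characteristics is the delicate part.
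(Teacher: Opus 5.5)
\textbf{The nonlinear scheme loses a derivative.} Your linear mechanism is essentially Proposition~\ref{p:linear_decay_with_weight_loss}: a weight at the critical rate $\kappa$, leftward characteristics with speed $\approx -K\re^{-\kappa y}$, and hence only data from $y_0\gtrsim\kappa^{-1}\ln(1+t/C)$ still undamped at time $t$. The gap is in the nonlinear step. Your $\mathcal N(v)$ contains $\frac{1}{\omega}\big(f'(p+\omega v)-f'(p)\big)(\omega v)_y$, hence a term $f''(p)\,\omega v\,v_y$. In a Duhamel formula built on the $v$-independent transport $(f'(p)-c)\partial_y$, a $W^{1,\infty}$ bound on $v$ requires a bound on $\partial_y\mathcal N(v)$, which contains $\omega v\,v_{yy}$. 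The iteration therefore closes neither in $\buc{1}$ nor in any $W^{k,\infty}$. This is exactly the obstruction the paper records for \eqref{e:unweighted_dynamics}.

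The missing idea is to keep the quasilinear structure. The solution-dependent speed $f'(p+\omega v)-c$ must stay in the transport operator, and the decay estimates must be applied to the resulting evolution system, whose coefficients depend on $v$. This requires a weight that works uniformly for all $v$ in a small $W^{1,\infty}$ ball, which is the role of Lemma~\ref{l:def_weight}. You need $\omega\leq C_{\omega,p}\absolute{p-u_+}$ as in \eqref{e:estimate_omega_p}, so that $f'(p+\omega v)-c$ still decays like $-\re^{-\kappa y}$. You also need the zero-order coefficient, which now contains $\frac{\omega'}{\omega}\big(c-f'(p+\omega v)\big)$, to stay nonpositive. This is delicate: as your $\omega=p'$ computation shows, that coefficient vanishes to leading order at the critical rate, so an $\mathcal O(\delta)$ relative change of the speed creates a sign-indefinite term of size $\delta\re^{-\kappa y}$.

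\textbf{Your fix for the time integral does not work as stated.} Along the characteristic through $(t,z)$ one has $\omega(Y(s))\approx(\re^{\kappa z}+C(t-s))^{-1}$, and $\int_0^t\frac{\d s}{1+t-s}=\ln(1+t)$ is unbounded. So one factor of $\omega$ does not make the source integrable. With $\norm{v(s)}_{L^\infty}\lesssim\varepsilon\,(C+\ln(1+s))^{-\alpha}$, the estimate over $s\in[t/2,t]$ alone is of order $\varepsilon^2(\ln t)^{1-2\alpha}$. This is not $\mathcal O((\ln t)^{-\alpha})$ for $\alpha<1$, and it grows for $\alpha<\frac12$. Put differently, a sign-indefinite zero-order term of size $\omega\absolute{v}$ can produce a factor $\exp(C\varepsilon(\ln t)^{1-\alpha})$ along characteristics.

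The kernel $(1+t-s)^{-1}$ is harmless only against algebraic decay, e.g.\ $\int_0^t(1+t-s)^{-1}(1+s)^{-2\alpha}\,\d s\lesssim(1+t)^{-\alpha}$ for $0<\alpha<1$. This is how the paper closes. It runs the bootstrap in the exponential-weight scale, not the logarithmic one, with $\Theta(t)=\sup_{s\leq t}(1+s)^\alpha\norm{(1+\re^{\kappa\alpha\cdot})w(s)}_{W^{1,\infty}}$. It applies the second part of Proposition~\ref{p:linear_decay_with_weight_loss} with $\rho=1/\omega$ to the source $\mathcal N=\mathcal O(\omega w^2)$.

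To close in the logarithmic scale you would need a second factor of $\omega$ in the source. One is available for a weight with $\omega'/\omega=-\kappa+\mathcal O(\re^{-\kappa y})$. Once $v\,v_y$ sits in the transport, the quadratic zero-order terms near $+\infty$ are $\big(\frac{g''(u_+)}{2}+\kappa f''(u_+)\big)\omega v^2+\mathcal O(\omega^2v^2)=\mathcal O(\omega^2v^2)$. Your argument does not use this cancellation.

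\textbf{Two smaller points.}
\begin{itemize}
\item With $\omega\equiv1$ on $y\leq x_0$, the first item of Assumption~\ref{a:sign_f_g} only gives $g'(p)-f''(p)p'\leq-\eta+f''(p)p'$. That item is the damping condition for the $v_y$ equation, whose zero-order coefficient there is $g'(p)-2f''(p)p'$; it is not the damping condition for $v$.
\item Your bound $a\leq-f''(p)p'$ on $[x_0,\infty)$ is incompatible with $\omega\sim C_\omega\re^{-\kappa y}$, since it forces $\omega'/\omega\leq-2\kappa+o(1)$. You only need $a\leq0$ there.
\end{itemize}
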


\section{Existence}

Looking for a traveling wave solution, we insert the ansatz $u(t,x) = p(x - ct)$ into \eqref{e:main} and obtain
\begin{equation}
	\label{e:profile}
	(-c + f'(p)) p' = g(p).
\end{equation}

\begin{lemma}
	Let $c$ satisfying Assumption \ref{a:speed}. There exists a solution $p\in \mathcal{C}^\infty(\R, (u_-, u_+))$ to \eqref{e:profile}. It satisfies
	\begin{equation*}
		\lim_{x\to -\infty} p(x) = u_-, 
		\hspace{4em}
		\lim_{x\to +\infty} p(x) = u_+. 
	\end{equation*}
\end{lemma}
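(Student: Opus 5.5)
The profile equation \eqref{e:profile} is a scalar autonomous first-order ODE. The plan is to rewrite it in normal form on the open interval $(u_-,u_+)$ and then use a phase-line argument. Set
\begin{equation*}
	F(u) \deq \frac{g(u)}{f'(u) - c}, \qquad u\in[u_-,u_+).
\end{equation*}
By Assumption \ref{a:speed}, $c > f'(p)$ for all $p\in[u_-,u_+)$, so the denominator never vanishes on $[u_-,u_+)$. Hence $F$ is $\mathcal{C}^2$ there (and smoother wherever $f,g$ are). Equation \eqref{e:profile} becomes $p' = F(p)$. By Assumption \ref{a:properties_g}, $F(u_-)=0$ and $F$ has no zero in $(u_-,u_+)$, so $F$ has a constant sign there. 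This sign must be positive for a monotone connection from $u_-$ to $u_+$. The sign of $F$ near $u_-$ is that of $g'(u_-)(u-u_-)/(f'(u_-)-c)$. Since $g'(u_-)<0$ and $f'(u_-)-c<0$, this is $(u-u_-)$ times a positive constant. So $F>0$ just to the right of $u_-$ if $u_-<u_+$. If $u_+<u_-$, the same computation gives $F<0$ on $(u_+,u_-)$, and the solution is monotone decreasing. This case is handled symmetrically, so I treat $u_-<u_+$.

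Pick any $u_0\in(u_-,u_+)$ and let $p$ be the maximal solution of $p'=F(p)$ with $p(0)=u_0$. By Cauchy--Lipschitz, $p$ stays in the invariant interval $(u_-,u_+)$: it cannot cross the equilibrium $u_-$ by uniqueness. Near $u_+$, in the supercritical case $F$ extends as a $\mathcal{C}^2$ function vanishing at $u_+$, so $u_+$ is also an equilibrium that cannot be reached in finite time. Since $p$ is strictly increasing and bounded, the maximal interval is all of $\R$, and $p$ has limits at $\pm\infty$. These limits must be zeros of $F$ in $[u_-,u_+]$, hence $u_-$ and $u_+$ respectively. For the limit $u_+$ I also need that $F$ stays bounded as $u\to u_+$, which is the next point.

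The main obstacle is the critical case $c=f'(u_+)$, where the denominator $f'(u)-c$ vanishes at $u_+$. There I will Taylor expand:
\begin{equation*}
	g(u) = \tfrac{1}{2}g''(u_+)(u-u_+)^2 + O(|u-u_+|^3), \qquad f'(u)-c = f''(u_+)(u-u_+) + O(|u-u_+|^2).
\end{equation*}
Using $g''(u_+)\neq0$ and $f''(u_+)\neq0$, this gives
\begin{equation*}
	F(u) = \frac{g''(u_+)}{2f''(u_+)}(u-u_+) + O(|u-u_+|^2).
\end{equation*}
So $F$ extends by continuity, in fact as a $\mathcal{C}^1$ function (the quotient of the remainders is regular since $f,g\in\mathcal{C}^3$), with $F(u_+)=0$ and $F'(u_+)=g''(u_+)/(2f''(u_+))<0$. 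This negative sign is exactly the sign condition of Assumption \ref{a:speed}. It makes $u_+$ a hyperbolic attracting equilibrium, consistent with $F>0$ on its left, so the argument above goes through, with exponential convergence as a bonus.

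Smoothness: $p$ takes values in the open interval $(u_-,u_+)$, where $F$ has the regularity of $f',g$. Bootstrapping $p'=F(p)$ gives as much regularity as $f$ and $g$ allow. The stated $\mathcal{C}^\infty$ regularity should be read under that proviso, or with $f,g$ smooth. I would note this rather than claim more than the equation provides.
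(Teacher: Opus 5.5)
Your proof is correct and follows the paper's argument: you rewrite \eqref{e:profile} as $p' = F(p)$, extend $F$ as a $\mathcal{C}^1$ function vanishing at $u_+$ (via the Taylor expansion in the critical case), and run the phase-line argument in which a monotone bounded solution converges to the only zeros $u_\pm$. Two of your additions fill in points the paper leaves implicit, and both are accurate: the sign computation $F'(u_-)>0$ fixes the direction of monotonicity, hence which limit is reached at which infinity, and you correctly note that $f,g\in\mathcal{C}^3$ only guarantees $p\in\mathcal{C}^3$, not $\mathcal{C}^\infty$.
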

\begin{proof}
	From Assumption \ref{a:speed}, the map $F: [u_-, u_+) \to \R$ 
	\begin{equation*}
		F(p) = \frac{g(p)}{f'(p) - c}
	\end{equation*}
	is well defined, $\mathcal{C}^1$, and has constant sign. 
	When either items of Assumption \ref{a:speed} is satisfied, $F$ can be extended to $[u_-, u_+]$ with $F(u_+) = 0$. The extended map is $\mathcal{C}^1$, and satisfies $F(u_-) = 0$, $F(u_+) = 0$.
	
	Now that we have collected a few fact about $F$, remark that the profile equation \eqref{e:profile} rewrites $p' = F(p)$. Thus, this ODE admits two constant solutions $p = u_-$ and $p = u_+$. Let us fix $p(0) = \frac{u_- + u_+}{2}$. The profile equation being an scalar first order ODE, non intersection of solutions guarantees that the corresponding solution satisfies $p(x) \in (u_-, u_+)$ for all $x\in \R$. Due to $p$ being monotone and bounded, it converges at both infinities. The only possible limits are $u_-$ and $u_+$ due to $F$ non-vanishing on the interval interior.
	The proof is complete. We incidentaly note that 
	\begin{equation*}
		F(p) = \frac{g''(u_+)}{2 f''(u_+)} (p-u_+) + \mathcal{O}_{p \to u_+}((p-u_+)^2).
	\end{equation*}
	As a consequence, there exists a constant $C_p$ such that when $x\to +\infty$:
	\begin{equation}
		\label{e:profile_asymptotics}
		\absolute{p(x) - u_+} \sim C_p \re^{-\kappa x}
	\end{equation}
	where 
	\begin{equation*}
		\kappa \deq \Absolute{\frac{g''(u_+)}{2 f''(u_+)}} > 0.
	\end{equation*}
\end{proof}

\begin{lemma}
	\label{l:equilibria_ordering}
	If $u_- \leq u_+$, then the following signs hold true: $g''(u_+) < 0$, $f''(u_+) > 0$, $p - u_+ < 0$ and $p'<0$.
	If $u_+ \leq u_-$, then the following signs hold true: $g''(u_+) > 0$, $f''(u_+) < 0$, $p - u_+ > 0$ and $p'>0$.
\end{lemma}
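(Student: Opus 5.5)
The plan is to read every sign off three facts: the first-order structure of $g$ near its two zeros, the sign condition in the critical item of Assumption \ref{a:speed}, and the profile equation \eqref{e:profile} rewritten as $p' = F(p)$ with $F(p) = g(p)/(f'(p)-c)$, as in the existence lemma. I treat the case $u_- \leq u_+$ in detail; the case $u_+ \leq u_-$ is the mirror image under $u \mapsto -u$, which flips the signs of $g''$, $f''$ and $p-u_+$. Since Assumption \ref{a:properties_g} forces $u_- \neq u_+$ (because $g'(u_-) < 0 = g'(u_+)$), I may assume $u_- < u_+$ strictly.

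\textbf{Sign of $g$ on the interval.} Because $g(u_-) = 0$ and $g'(u_-) < 0$, the function $g$ is negative just to the right of $u_-$. By the third item of Assumption \ref{a:properties_g}, $g$ does not vanish on $(u_-, u_+)$, so $g < 0$ on the whole open interval.

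\textbf{Sign of $g''(u_+)$.} A Taylor expansion at $u_+$, using $g(u_+) = g'(u_+) = 0$, gives
\begin{equation*}
	g(u) = \tfrac12 g''(u_+)(u-u_+)^2 + \mathcal{O}\big((u-u_+)^3\big).
\end{equation*}
For $u$ slightly below $u_+$ the left-hand side is negative. Since $g''(u_+) \neq 0$, this forces $g''(u_+) < 0$.

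\textbf{Sign of $f''(u_+)$.} In the critical case, Assumption \ref{a:speed} requires $g''(u_+)/f''(u_+) < 0$, so $f''(u_+) > 0$. This is the case relevant for the lemma and for Theorem \ref{t:stability}. I would use it to justify the remark after Assumption \ref{a:sign_f_g}: near $+\infty$, the product $f''(p)p'$ has the sign of $f''(u_+)p'$.

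\textbf{Sign of $p - u_+$.} The existence lemma gives $p \in (u_-, u_+)$, so $p - u_+ < 0$ immediately.

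\textbf{Sign of $p'$.} By \eqref{e:profile}, $p' = g(p)/(f'(p)-c)$. On $[u_-, u_+)$, Assumption \ref{a:speed} fixes the sign of $f'(p) - c$, and the first step fixes the sign of $g(p)$. So $p'$ has a constant sign, namely the sign of the quotient $g/(f'-c)$, and $p$ never vanishes in derivative. As a consistency check near $+\infty$, the expansion $F(p) = \frac{g''(u_+)}{2f''(u_+)}(p-u_+) + \mathcal{O}((p-u_+)^2)$ gives the sign of $p'$ as the product of $\operatorname{sign}\big(g''(u_+)/f''(u_+)\big)$ and $\operatorname{sign}(p-u_+)$, both of which were found above.

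\textbf{Main obstacle.} This last step is where I expect the difficulty: the orientation conventions (the sign of $f'-c$ and the direction of the connection from $u_-$ to $u_+$) must be tracked carefully to land on the sign stated for $p'$. I would check that the global constant-sign argument and the local expansion at $u_+$ agree before concluding. Finally, I would repeat the same chain of arguments verbatim for $u_+ < u_-$, where $g > 0$ on $(u_+, u_-)$ and every sign reverses.
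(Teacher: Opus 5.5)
\textbf{Gap: the sign of $p'$ is never computed, and the stated sign cannot be proved.} Your treatment of $g''(u_+)$, $f''(u_+)$ and $p-u_+$ is correct and matches the paper's terse argument. You use $g<0$ on $(u_-,u_+)$, which follows from $g'(u_-)<0$ and non-vanishing, the quadratic Taylor expansion at $u_+$, the sign condition in the critical item of Assumption \ref{a:speed}, and $p\in(u_-,u_+)$. You are also right to restrict the claim on $f''(u_+)$ to the critical case. The reduction $u\mapsto -u$ is valid too; note that it flips $p'$ as well.

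The problem is the final step. You say $p'$ has ``the sign of the quotient $g/(f'-c)$'' without evaluating it, and you defer the orientation check in the hope of landing on the stated sign. Carry it out for $u_-<u_+$. Your first step gives $g(p)<0$, and Assumption \ref{a:speed} gives $f'(p)-c<0$, so $p'=g(p)/(f'(p)-c)>0$. Your local check agrees: $\mathrm{sign}\big(g''(u_+)/f''(u_+)\big)=-1$ and $\mathrm{sign}(p-u_+)=-1$, whose product is $+1$. More simply, a monotone $p$ with $p\to u_-$ at $-\infty$ and $p\to u_+>u_-$ at $+\infty$ must be increasing.

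So the claim $p'<0$ in the statement cannot be proved. It is a sign slip in the lemma, and symmetrically $p'>0$ should read $p'<0$ when $u_+<u_-$. The rest of the paper confirms the corrected version. The remark after Assumption \ref{a:sign_f_g} invokes this lemma to conclude $f''(p)p'>0$ near $+\infty$, and with $f''(u_+)>0$ that requires $p'>0$. Your own planned use of the lemma for that remark would have exposed the contradiction.

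A correct write-up should derive $p'>0$ for $u_-<u_+$ and $p'<0$ for $u_+<u_-$. It should state explicitly that the lemma's sign for $p'$ is reversed, rather than leaving the orientation check open.
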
 
\begin{proof}
	The sign of $g''(u_+)$ is deduced from the facts that $g$ does not vanish and that $g'(u_-) < 0$. The other three signs are easily obtained.
\end{proof}

\section{Linear stability}

Our basic tool to control temporal growth/decay is a resolvent estimate from \cite{Duchene_Rodrigues_20}. We present it in section \S \ref{s:linear_decay_1}. We can also improve decay rates in exchange of spatial localization. We detail it in section \ref{s:linear_decay_2}

\subsection{Temporal decay without localization loss}
\label{s:linear_decay_1}

We consider the following generic eigenproblem:
\begin{equation}
	\label{e:spectral_pb}
	\lambda v - a \partial_x v - b = f.
\end{equation}

\begin{proposition}
	\label{p:resolvent_estimates}
	Let $a, b \in \buc{0}(\R)$, and let $f \in \buc{0}(\R, \R)$. For every $\lambda \in \C$ such that 
	\begin{equation*}
		\real{\lambda} \geq \sup_\R b,
	\end{equation*}
	there exists a unique $v \in \buc{1}(\R, \R)$ satisfying \eqref{e:spectral_pb}. Furthermore, 
	\begin{equation*}
		\norm{v}_{W^{1, \infty}(\R)} \leq \frac{\norm{f}_{L^\infty(\R)}}{\sup_\R b - \real{\lambda}}.
	\end{equation*}
\end{proposition}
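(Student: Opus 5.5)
We read \eqref{e:spectral_pb} with the zeroth-order term as the multiplication $b v$, as the resolvent context indicates. We read the constant in the estimate as $\absolute{\sup_\R b - \real{\lambda}} = \real{\lambda} - \sup_\R b$, since this quantity is nonnegative under the hypothesis $\real\lambda \geq \sup_\R b$. In the borderline case $\real\lambda = \sup_\R b$ the right-hand side is $+\infty$, so only existence and uniqueness have content there; we treat that case at the end. We also assume $a$ does not vanish, so that we are solving a genuine first-order transport equation. The plan has three steps: an a priori $L^\infty$ bound by a maximum principle, existence by integration along characteristics, and the derivative bound read off from the equation.

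\emph{A priori bound.} Let $v \in \buc{1}$ solve $\lambda v - a v' - b v = f$, and set $\mu = \real\lambda - \sup_\R b > 0$. I would work with $\absolute{v}^2$. Multiplying the equation by $\bar v$ and taking real parts gives
\begin{equation*}
(\real\lambda - b)\absolute{v}^2 - \tfrac{a}{2}(\absolute{v}^2)' = \real(f\bar v).
\end{equation*}
Take a sequence $x_n$ with $\absolute{v(x_n)} \to \norm{v}_\infty$. Because $v \in \buc{1}$, we may recentre: translates of $v$ along $x_n$ converge locally uniformly, and along a maximizing sequence $(\absolute{v}^2)'(x_n) \to 0$ (a standard Barbalat-type argument using uniform continuity of $v'$). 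Passing to the limit yields $\mu \norm{v}_\infty^2 \leq \norm{f}_\infty \norm{v}_\infty$. Hence $\norm{v}_\infty \leq \norm{f}_\infty/\mu$, and uniqueness follows by linearity.

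\emph{Existence and derivative bound.} On each interval where $a$ has a fixed sign, I would solve $v' = a^{-1}\big((\lambda - b)v - f\big)$ by Duhamel's formula along characteristics. The integration is taken from the end toward which the exponential factor $\exp\big(-\int (\lambda - b)/a\big)$ decays; this is the direction made available by $\real(\lambda - b) \geq \mu > 0$. This produces the unique bounded solution, and it is $\buc{1}$ because $a$, $b$, $f \in \buc{0}$. For the $W^{1,\infty}$ part, the equation gives $a v' = (\lambda - b)v - f$. With $\absolute{a} \geq a_0 > 0$, this bounds $\norm{v'}_\infty$ by $\norm{f}_\infty$ times a constant over $\mu$. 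That constant depends on $\absolute{\lambda}$, $\norm{b}_\infty$ and $a_0$, so the stated constant-free form really controls $\norm{v}_\infty$ and $\norm{a v'}_\infty$.

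\emph{Main obstacle.} I expect the borderline case $\real\lambda = \sup_\R b$ to be the hardest, together with points where $a$ vanishes, which are characteristic points. At equality the bound degenerates, so I would obtain existence as a limit $\lambda_\varepsilon = \lambda + \varepsilon \to \lambda$. This needs extra decay of $\sup b - b$ at infinity, or a weighted space, to prevent loss of compactness. Near zeros of $a$, I would match the Duhamel solutions from the two sides. The matching uses the sign of $\real(\lambda - b)$ to select the regular branch, as in \cite{Duchene_Rodrigues_20}.
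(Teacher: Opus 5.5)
The paper gives no argument here beyond citing \cite{Duchene_Rodrigues_20}, and the statement as printed is defective, so the question is what can actually be proved. Your maximum-principle step is correct and complete. For $\real\lambda>\sup_\R b$, any $v\in\buc{1}$ solving \eqref{e:spectral_pb} satisfies $\norm{v}_{L^\infty}\leq\norm{f}_{L^\infty}/(\real\lambda-\sup_\R b)$, with no condition on $a$, and uniqueness follows.

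The genuine gap is the derivative estimate. Reading $v'$ off $av'=(\lambda-b)v-f$ needs $\inf_\R\absolute{a}>0$. That is strictly more than the ``$a$ does not vanish'' you announce, and it fails exactly where this paper uses the result: the transport coefficient $c-f'(p)$ tends to $0$ at $+\infty$. In that case, for $f$ merely in $\buc{0}$, the Duhamel solution can have unbounded $v'$, so your claim that it is $\buc{1}$ ``because $a,b,f\in\buc{0}$'' breaks down. Even when $\absolute{a}\geq a_0$, your bound on $v'$ grows with $\absolute{\mathrm{Im}\,\lambda}$ and does not decay as $\lambda\to+\infty$. It therefore cannot feed the Hille--Yosida/Pazy argument of Proposition \ref{p:generic_linear_decay}. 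Your remark that the constant-free form still controls $\norm{av'}_{L^\infty}$ is also wrong: for $a\equiv1$, $b\equiv0$, $\lambda=\mu+i\tau$, $f=\re^{i\tau x}$ one gets $v=f/\mu$ and $\norm{v'}_{L^\infty}=\tau/\mu$. So no bound of $v'$ by $\norm{f}_{L^\infty}$ alone is possible.

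The missing idea is to differentiate the equation: $w=v'$ solves $\lambda w-aw'-(b+a')w=f'+b'v$. Your maximum principle then gives, at least a priori, $\norm{v'}_{L^\infty}\leq(\norm{f'}_{L^\infty}+\norm{b'}_{L^\infty}\norm{v}_{L^\infty})/(\real\lambda-\sup_\R(b+a'))$. This needs $a,b,f\in\buc{1}$ and $\real\lambda>\sup_\R(b+a')$. The quantity $b+a'$ is $b-a_x$ in the convention $v_t+av_x=bv$, exactly what the paper controls in Proposition \ref{p:linear_decay_with_weight_loss}. The resulting bound is uniform in $\mathrm{Im}\,\lambda$ and does not degenerate where $a$ is small.

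Second, what you list as the main obstacle consists of cases where the statement is false, so no limiting or matching argument can succeed.
At $\real\lambda=\sup_\R b$, take $a\equiv1$, $b\equiv0$, $\lambda=0$. Constants lie in the kernel, and $f\equiv1$ has no bounded solution. Since the coefficients are constant, no extra assumption on $\sup_\R b-b$ of the kind you suggest helps; the borderline must simply be excluded.
Where $a$ vanishes, $\buc{1}$ regularity fails. If $a\equiv0$ on an interval, then $v=f/(\lambda-b)$ there, which is not $C^1$ for generic $f\in\buc{0}$. Even for smooth $f$, an isolated zero imposes a threshold. For example, with $a=\tanh$, $b\equiv0$, $\lambda\in(0,1)$, the unique bounded solution is $f(0)/\lambda+K_\pm\absolute{x}^\lambda+\mathcal{O}(\absolute{x})$ on either side of $0$, with generically $K_\pm\neq0$. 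This is the condition $\real\lambda>\sup_\R(b+a')$ resurfacing.

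The provable statement thus has two parts:
\begin{enumerate}
\item the $L^\infty$ bound for $\real\lambda>\sup_\R b$, which your argument fully covers;
\item a $W^{1,\infty}$ bound in terms of $\norm{f}_{W^{1,\infty}}$, for $f\in\buc{1}$ and $\real\lambda>\max(\sup_\R b,\sup_\R(b+a'))$, which requires the differentiation step above.
\end{enumerate}
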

\begin{proof}
	See \cite{Duchene_Rodrigues_20}.
\end{proof}

\begin{proposition}
	\label{p:generic_linear_decay}
	Let $a, b \in \buc{0}([0,+\infty) \times \R, \R)$ and $u_0 \in \buc{1}(\R, \R)$. There exists a unique solution $v\in \buc{0}([0,+\infty), \buc{1}(\R, \R))$ to the linear problem
	\begin{equation}
		\label{e:linear_problem}
		\begin{cases*}
			v_t + a v_x = b v, \\
			v(0, x) = v_0(x)
		\end{cases*}
	\end{equation}
	Futhermore, it satisfies for all $t \geq 0$
	\begin{equation*}
		\norm{v(t)}_{W^{1, \infty}(\R)} \leq \exp\left(\int_0^t \sup_\R b(s, \cdot) \d s\right) \norm{v_0}_{W^{1, \infty}(\R)}
	\end{equation*}
\end{proposition}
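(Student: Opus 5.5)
The plan is to obtain the bound from the resolvent estimate of Proposition \ref{p:resolvent_estimates}, in two steps. First I treat coefficients that are frozen in time and build a contraction-type semigroup by Hille--Yosida. Then I recover the time-dependent case by Kato's construction of evolution systems, approximating by coefficients that are piecewise constant in time. The point of this route is that the only quantitative input is the constant $1/(\real\lambda-\sup_\R b)$ in the resolvent bound, which is already stated in the $W^{1,\infty}$ norm. So I never differentiate the equation: the commutator terms $a_x v_x$ and $b_x v$, which would spoil a direct energy or characteristic estimate on $v_x$, never appear. I read the condition in Proposition \ref{p:resolvent_estimates} as $\real\lambda>\sup_\R b$ and the bound as $\norm{f}_{L^\infty}/(\real\lambda-\sup_\R b)$, which is the meaningful version.

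\textbf{Step 1 (frozen coefficients).} Fix $s\geq0$ and set $L_s v=-a(s,\cdot)\partial_x v+b(s,\cdot)v$ on $X=\buc{1}(\R)$ equipped with the $W^{1,\infty}$ norm, with domain $D(L_s)=\{v\in X:\ L_s v\in X\}$. Proposition \ref{p:resolvent_estimates} applied to $\lambda v-a\partial_xv-bv=f$ shows that every real $\lambda>\beta_s\deq\sup_\R b(s,\cdot)$ lies in the resolvent set. It also gives
\begin{equation*}
	\norm{(\lambda-L_s)^{-k}}_{X\to X}\leq(\lambda-\beta_s)^{-k}
\end{equation*}
by iterating the $k=1$ bound. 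I then check density of $D(L_s)$ and closedness of $L_s$. For density I would use $\lambda(\lambda-L_s)^{-1}v\to v$ as $\lambda\to\infty$, tested first on smooth $v$ and then extended by uniform boundedness. Hille--Yosida then yields a $\mathcal{C}_0$ semigroup $\re^{tL_s}$ on $X$ with $\norm{\re^{tL_s}}_{X\to X}\leq\re^{\beta_s t}$.

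\textbf{Step 2 (evolution system).} For a partition $0=t_0<t_1<\dots$ of mesh $h$, I set
\begin{equation*}
	U_h(t,0)=\re^{(t-t_k)L_{t_k}}\re^{hL_{t_{k-1}}}\cdots\re^{hL_{t_0}},\qquad t\in[t_k,t_{k+1}].
\end{equation*}
By Step 1 this satisfies $\norm{U_h(t,0)}\leq\exp\big(\sum_j\beta_{t_j}\Delta t_j\big)$. The constant in Kato's stability condition is exactly $1$, so the family $\{L_s\}$ is Kato-stable. By uniform continuity of $a$ and $b$, the Riemann sums converge to $\int_0^t\sup_\R b(s,\cdot)\,\d s$, because $s\mapsto\sup_\R b(s,\cdot)$ is continuous.

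It remains to show that $U_h(t,0)v_0$ converges, as $h\to0$, to a function $v\in\buc{0}([0,\infty),X)$ solving \eqref{e:linear_problem}. Once this holds, lower semicontinuity of the norm passes the bound to the limit and gives the stated estimate. For convergence I would first identify the solution explicitly by characteristics: $X'=a(t,X)$ and $v(t,X(t))=v_0(X(0))\exp\int_0^tb(s,X(s))\,\d s$. This requires $a$ to be Lipschitz in $x$, and I would get it by mollifying $a$ and $b$ in $x$. With this formula, $U_h v_0\to v$ uniformly on compact time intervals in $L^\infty$. The uniform $W^{1,\infty}$ bound then gives $v(t)\in W^{1,\infty}$ with the same norm bound, by weak-$*$ compactness of derivatives. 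Removing the mollification is done by a second limit, again using that the bound depends only on $\sup b$. Uniqueness comes from the $L^\infty$ version of the same characteristic or Duhamel argument applied to the difference of two solutions.

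\textbf{Main obstacle.} The delicate point is the genuine membership of $v(t)$ in $\buc{1}$ and its continuity in time with values in $W^{1,\infty}$, as opposed to mere boundedness of $v_x$. Kato's theory asks for a common, $s$-independent domain regularity, and the domains here depend on $a(s,\cdot)$. My first attempt would be to prove convergence in $\buc{0}$ and keep the $W^{1,\infty}$ control only as an a priori bound, which suffices for the stated inequality. I would then obtain $\buc{1}$-continuity via the mollified problem together with the uniform bound.
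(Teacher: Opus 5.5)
Your strategy is the paper's own argument written out in detail: the paper just invokes the constant-one resolvent bound of Proposition~\ref{p:resolvent_estimates} and Pazy's evolution-system theory. However, the step everything rests on fails, and the failure comes from Proposition~\ref{p:resolvent_estimates} itself. Even with your sign correction, the bound $\norm{(\lambda-L_s)^{-1}}_{\buc{1}\to\buc{1}}\leq(\lambda-\sup_\R b)^{-1}$ is false, and so is the $W^{1,\infty}$ estimate you are asked to prove. Your remark that this route lets you \emph{never differentiate the equation}, so that $a_xv_x$ and $b_xv$ never appear, is exactly where it breaks. Those terms are intrinsic to the dynamics, not an artifact of the method.

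Take $b\equiv0$ and the smooth bounded coefficient $a(x)=-\tanh x$. Characteristics contract toward $0$ with $\partial_xX(t,0)=\re^{-t}$, and the unique classical solution is constant along them. Hence $v_x(t,0)=\re^{t}v_0'(0)$, whereas the claimed bound is $\norm{v_0}_{W^{1,\infty}}$. The same failure is visible at the resolvent level. If $v\in\buc{1}$ solves $\lambda v-\tanh(x)\,v_x=f$, then $\tanh(x)\,v_x=\lambda v-f$ is differentiable at $0$ with derivative $v_x(0)$, so $(\lambda-1)v_x(0)=f_x(0)$. With $f=\sin$ this gives $\norm{v}_{W^{1,\infty}}\geq\absolute{\lambda-1}^{-1}$, which is incompatible with $\norm{f}/\lambda$ as $\lambda\downarrow1$, although $\sup_\R b=0$. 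Hille--Yosida on $\buc{1}$ can only give a rate governed by $\sup_\R(b-a_x)$ (here $1$), plus a contribution from $b_x$.

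Your Step 2 relies on the same false uniformity to remove the mollification. The true $W^{1,\infty}$ growth rate of the mollified problems is governed by $\sup_\R(b_\varepsilon-\partial_xa_\varepsilon)$, which in general blows up as $\varepsilon\to0$ when $a$ is only continuous. With merely continuous $a$, existence can even fail outright. For $a(x)=-\mathrm{sign}(x)\min(\absolute{x}^{1/2},1)$ and $b\equiv0$, every characteristic reaches $0$ in finite time and stays there. A classical solution is constant along characteristics, so it would force $v_0$ to be constant.

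What can be proved, for coefficients with bounded uniformly continuous $x$-derivatives, is the following. First, the $L^\infty$ estimate $\norm{v(t)}_{L^\infty}\leq\exp(\int_0^t\sup_\R b(s,\cdot)\,\d s)\norm{v_0}_{L^\infty}$, which is immediate from the characteristic formula. Second, a bound on $v_x$ obtained by Duhamel from $(v_x)_t+a(v_x)_x=(b-a_x)v_x+b_xv$, that is, with rate $\sup_\R(b-a_x)$ and source $b_xv$. This is what the paper itself does in the derivative part of Proposition~\ref{p:linear_decay_with_weight_loss}.

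In evolution-system language, the common-domain obstacle you point out is handled by Pazy's theorem for hyperbolic evolution systems, with $X=\buc{0}$ and $Y=\buc{1}\subset D(A(t))$ for all $t$. There the $X$-stability constant is governed by $\sup_\R b$, but the $Y$-stability constants necessarily involve $a_x$ and $b_x$.
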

\begin{proof}
	It is the same proof as in \cite{Duchene_Rodrigues_20}. From the resolvent estimates with constant one (see Lemma \ref{p:resolvent_estimates}) standard theory on evolution systems \cite{Pazy_83} gives existence and uniqueness of a solution. It satisfies the claimed estimate.
\end{proof}

\subsection{Temporal decay with localization loss}
\label{s:linear_decay_2}

Before stating and proving the main statement of this section, let us recall some well known fact about transport equation.

\begin{equation}
	\label{e:ODE_a}
	\begin{cases}
		\partial_t X(t,x) = a(t, X(t, x))\\
		X(0, x) = x
	\end{cases}
\end{equation}

\begin{lemma}
	Assume that $a$ is bounded from above and below. Then for all $t\geq 0$ there exist $X_\mathrm{inv}(t) : \R \to \R$ such that for all $y \in \R$,
	\begin{equation*}
		X(t, X_\mathrm{inv}(t, y)) = y.
	\end{equation*}
\end{lemma}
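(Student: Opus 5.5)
The plan is to show that, for each fixed $t\geq 0$, the flow map $x\mapsto X(t,x)$ of \eqref{e:ODE_a} is a bijection of $\R$. Then $X_\mathrm{inv}(t)$ is defined as its inverse, and the identity $X(t,X_\mathrm{inv}(t,y))=y$ holds by construction. I assume, as in the surrounding setting, that $a$ is continuous and Lipschitz (or $\mathcal{C}^1$) in $x$, with $\sup|a|\leq M$. Then Cauchy--Lipschitz gives a unique solution for each initial point, and it is global because $|\partial_t X|\leq M$ rules out blow-up.

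First, I establish injectivity. If $X(t,x_1)=X(t,x_2)$ for some $t$, then both curves solve the same ODE with the same value at time $t$. Uniqueness backward in time from $t$ forces them to coincide on $[0,t]$, so $x_1=x_2$. Equivalently, the order of characteristics is preserved: $x_1<x_2$ implies $X(t,x_1)<X(t,x_2)$.

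Second, I establish continuity of $x\mapsto X(t,x)$ via Gronwall: $|X(t,x_1)-X(t,x_2)|\leq \re^{Lt}|x_1-x_2|$, where $L$ is the Lipschitz constant of $a$ in $x$. Only continuity is needed here, so a local Lipschitz bound on $a$ would suffice.

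Third, I establish surjectivity from the two-sided bound on $a$. Integrating gives $|X(t,x)-x|\leq Mt$, so $X(t,x)\to\pm\infty$ as $x\to\pm\infty$. A continuous map with these limits is onto $\R$ by the intermediate value theorem. Combined with strict monotonicity, $X(t)$ is a homeomorphism, and I set $X_\mathrm{inv}(t)\deq X(t)^{-1}$.

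None of these steps is hard. The only point needing care is that the statement only says "bounded from above and below", so I will state explicitly the regularity of $a$ in $x$ that is needed for uniqueness. Alternatively, I can obtain $X_\mathrm{inv}(t,y)$ directly as $Y(0)$, where $Y$ solves the backward problem $\partial_s Y=a(s,Y)$ on $[0,t]$ with $Y(t)=y$. This solution exists globally by the same boundedness of $a$, and uniqueness gives $X(t,Y(0))=y$.
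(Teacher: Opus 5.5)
Your proposal is correct and follows essentially the paper's argument. The two-sided bound on $a$ keeps $X(t,x)$ within a bounded distance of $x$, so $X(t,\cdot)$ tends to $\pm\infty$ at $\pm\infty$, and the intermediate value theorem applied to the continuous flow map gives surjectivity, which is all the statement requires. Your injectivity step and the explicit Lipschitz hypothesis on $a$ go beyond what the paper writes: it only invokes continuous dependence on initial data. They make $X_\mathrm{inv}(t)$ a genuine inverse rather than just a right inverse.
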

\begin{proof}
	We have to prove surjectivity of the maps $x\mapsto X(t,x)$, $t\geq 0$. Let $y \in \R$. For all $t\geq 0$ let $a_\mathrm{min}(t)$, $a_\mathrm{max}(t)$ such that for all $x\in \R$
	\begin{equation*}
		a_\mathrm{min}(t) \leq a(t, x) \leq a_\mathrm{max}(t).
	\end{equation*}
	Let $t\geq 0$ fixed, and remark that 
	\begin{equation*}
		X(t, x) \in \left[x + \int_0^t a_\mathrm{min}(s), x + \int_0^t a_\mathrm{max}(s)\right].
	\end{equation*}
	As a consequence, taking $x \to -\infty$ provides an $x_1(t)$ such that $X(t, x_1) \leq x_1 + \int_0^t a_\mathrm{max}(s) \d s < y$. Similarly there exists $x_2(t)$ such that $X(t, x_2) \geq x_2 + \int_0^t a_\mathrm{min}(s) \d s > y$. By intermediate values theorem ($X(t, \cdot)$ is continuous since solution to the ODE \eqref{e:ODE_a} are smooth with respect to variation of the inial data) there exists $x_0 \in [x_1, x_2]$ such that $X(t, x_0) = y$. The proof is complete.
\end{proof}

\begin{proposition}
	\label{p:linear_decay_with_weight_loss}
	Let $a, b\in \buc{0}([0,+\infty) \times \R, \R)$ such that there exists positive constants $C_a$, $\kappa_a$, $a_0$, $\eta$ satisfying $\forall t \geq 0$, $\forall x \geq 0$,
	\begin{equation}
		\label{e:hypothesis_linear_coefficients}
		a(t,x) \leq -C_a \re^{-\kappa_a x}, 
		\hspace{4em}
		b(t,x) \leq 0
	\end{equation}
	and $\forall x\leq 0$, 
	\begin{equation}
		\label{e:hypothesis_linear_coefficients_2}
		a(t,x) \leq -a_0, 
		\hspace{4em}
		b(t,x) \leq -\eta
	\end{equation}
	Then there exists $C>0$ such that for every $v_0 \in \buc{0}(\R, \R)$, and every non-decreasing weight $\rho:\R \to [1, +\infty)$  the unique solution to \eqref{e:linear_problem} satisfies
	\begin{equation*}
		\norm{v(t,\cdot)}_{L^\infty(\R)} \leq \frac{1}{\rho\left(\frac{\ln (1 + C_a \kappa_a t)}{\kappa_a}\right)} \norm{\rho v_0}_{L^\infty(\R)}.
	\end{equation*}
	If in addition there exists $C_b > 0$ such that for all $t \geq 0$ and $\forall x \in \geq 0$
	\begin{equation*}
		b - a_x (t,x) \leq 0, 
		\hspace{4em}
		b_x (t,x) \leq - C_b \re^{-\kappa_a x},
	\end{equation*}
	while $\forall x \leq 0$
	\begin{equation*}
		b - a_x (t,x) \leq -\eta,
	\end{equation*}
	then there exists $C>0$ such that for every $v_0 \in \buc{0}(\R, \R)$, for every $\alpha > 0$ for all $t\geq 0$
	\begin{equation*}
		\norm{v(t,\cdot)}_{W^{1, \infty}(\R)} \leq \frac{C}{(1 + C_a \kappa_a t)^\alpha} \norm{(1 + \re^{\kappa_a \alpha \cdot}) v_0}_{W^{1,\infty}(\R)}.
	\end{equation*}
\end{proposition}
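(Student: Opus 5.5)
Both estimates will come from the method of characteristics, using the flow $X$ of \eqref{e:ODE_a} and its inverse $X_\mathrm{inv}$ from the preceding lemma. Along characteristics, \eqref{e:linear_problem} reads $\frac{\d}{\d s} v(s, X(s,x)) = b(s,X(s,x))\, v(s,X(s,x))$. Hence
\begin{equation*}
v(t, X(t,x)) = \exp\Big(\int_0^t b(s,X(s,x))\,\d s\Big)\, v_0(x).
\end{equation*}
Since $b \leq 0$ everywhere, $\absolute{v(t,y)} \leq \absolute{v_0(X_\mathrm{inv}(t,y))}$. The problem therefore reduces to showing that the foot $x = X_\mathrm{inv}(t,y)$ of any characteristic has moved far to the right, so that the weight $\rho(x)$ is large. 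The exception is the part of the trajectory spent in $x \leq 0$, where $b\leq -\eta$ provides exponential damping.

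\textbf{First estimate.} Fix $y$ and set $x = X_\mathrm{inv}(t,y)$. Since $a<0$ everywhere, characteristics move strictly leftward, so $X(s,x)$ is decreasing in $s$.

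If $X(s,x)\geq 0$ for all $s\in[0,t]$, compare with the ODE $\dot Y = -C_a \re^{-\kappa_a Y}$. Its solution satisfies $\re^{\kappa_a Y(s)} = \re^{\kappa_a x} - C_a\kappa_a s$. Since $X(t,x)\geq 0$, this forces $\re^{\kappa_a x} \geq 1 + C_a \kappa_a t$, that is, $x \geq \ln(1+C_a\kappa_a t)/\kappa_a$. Using $\rho$ non-decreasing,
\begin{equation*}
\absolute{v(t,y)} \leq \frac{\rho(x)\absolute{v_0(x)}}{\rho(x)} \leq \frac{\norm{\rho v_0}_\infty}{\rho\big(\ln(1+C_a\kappa_at)/\kappa_a\big)}.
\end{equation*}

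If the characteristic crosses $0$ at some time $s_0$, the same comparison gives $\rho(x) \geq \rho(\ln(1+C_a\kappa_a s_0)/\kappa_a)$. On $[s_0,t]$ it picks up a factor $\re^{-\eta(t-s_0)}$. We then need
\begin{equation*}
\re^{-\eta(t-s_0)}\,\rho\big(\ln(1+C_a\kappa_a t)/\kappa_a\big) \lesssim \rho\big(\ln(1+C_a\kappa_a s_0)/\kappa_a\big).
\end{equation*}
For general $\rho$ this is false, since $\rho$ could jump. So I will either allow the constant $C$ announced in the statement and restrict to the sub-exponential weights actually used, or track the characteristic more carefully. The case $x\leq 0$ is trivial, because then $\absolute{v(t,y)}\leq \re^{-\eta t}\absolute{v_0(x)}$ and $\rho \geq 1$. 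I expect this crossing regime to be the main technical nuisance: the clean bound holds only for weights with controlled growth, and for those it reduces to $(1+C_a\kappa_a t)^{\alpha}\re^{-\eta(t-s_0)}\lesssim (1+C_a\kappa_a s_0)^\alpha$, which holds.

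\textbf{Derivative estimate.} Differentiate \eqref{e:linear_problem}. The function $w=v_x$ solves
\begin{equation*}
w_t + a w_x = (b-a_x)w + b_x v.
\end{equation*}
This is a transport equation of the same type, with zeroth-order coefficient $b-a_x$ satisfying the same sign hypotheses as $b$, and with a source term $b_x v$. Apply Duhamel along characteristics and use the first part with $\rho(x) = 1+\re^{\kappa_a\alpha x}$, which gives $\absolute{v(s,\cdot)}\lesssim (1+C_a\kappa_a s)^{-\alpha}\norm{\rho v_0}_\infty$. The source integral $\int_0^t \absolute{b_x v}(s,X(s,x))\,\d s$ must then decay like $(1+t)^{-\alpha}$. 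Here two hypotheses are used together: the homogeneous part decays through the weight $\rho$ exactly as before, and the bound $\absolute{b_x}\lesssim \re^{-\kappa_a X(s)}$ for $x\geq0$ is integrable along characteristics, because $\re^{-\kappa_a X(s)}\sim 1/(1+C_a\kappa_a s)$ combines with $\absolute{\dot X}\gtrsim \re^{-\kappa_aX}$.

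I will also use the sign condition $b_x\leq -C_b\re^{-\kappa_a x}$. The nonlinear reason is that $b_x$ has a definite sign. To exploit it, I would weight $w$ by $\rho$ and run a comparison or maximum-principle argument, bounding $\rho w$ jointly with $\rho v$. This is where I expect the main difficulty to lie: the weighted derivative is not directly controlled, and the source term has to be handled carefully.
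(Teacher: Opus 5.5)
Your $L^\infty$ argument is correct, and in the crossing regime it is more careful than the paper's. For characteristics that enter $(-\infty,0]$ through $x=0$, the paper only invokes ``$b\le 0$, no growth'', which yields $1/\rho(\ell(s_0))$ instead of $1/\rho(\ell(t))$, where $\ell(t)=\kappa_a^{-1}\ln(1+At)$ and $A=C_a\kappa_a$. You are right that the factor $\re^{-\eta(t-s_0)}$ is needed, and that the bound cannot hold uniformly over arbitrary non-decreasing $\rho$. Sub-exponential growth alone does not rescue it either, since a jump of $\rho$ at a large point $L$ is only bounded by $C\re^{\kappa L}$. What you actually use is $\rho(\ell(t))\re^{-\eta(t-s)}\lesssim\rho(\ell(s))$. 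This holds for $\rho=1+\re^{\kappa_a\alpha x}$, because $1+At\le(1+As)(1+A(t-s))$.

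The genuine gap is the derivative estimate. The mechanism you cite is incorrect. Along the backward characteristic from $(t,y)$, $y\ge 0$, with foot $x=X_{\mathrm{inv}}(t,y)$, comparison gives $\re^{\kappa_a X(s,x)}\ge \re^{\kappa_a y}+A(t-s)$. So $|b_x(s,X(s,x))|\lesssim (1+A(t-s))^{-1}$: the decay is in $t-s$, not in $s$. The resulting bound on $\int_0^t |b_x(s,X(s,x))|\,\d s$ is $A^{-1}\ln(1+At)$, with equality when $a=-C_a\re^{-\kappa_a x}$ and $|b_x|=C_b\re^{-\kappa_a x}$, so it is not integrable in the sense you need. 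The relation $|\dot X|\gtrsim \re^{-\kappa_a X}$ only turns it into $(x-y)/C_a$, which is not bounded in $t$ either. If, as you propose, you pair this with the sup-norm bound $\|v(s)\|_\infty\lesssim(1+As)^{-\alpha}$, you obtain $\int_0^t(1+A(t-s))^{-1}(1+As)^{-\alpha}\,\d s$. The part $s\in[t/2,t]$ alone shows this is at least $A^{-1}\ln(1+At/2)(1+At)^{-\alpha}$, and it is no better than $(1+At)^{-1}$ when $\alpha>1$. So the claimed rate does not follow. This Duhamel-plus-convolution route is exactly the paper's. There the convolution bound is asserted via a Gamma-function formula, and it carries the same logarithmic loss, so the step you leave open is not really closed in the paper either.

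Your second idea, exploiting the sign of $b_x$ through a maximum principle, cannot work, because $v$, hence the source $b_x v$, has no sign. What the paper actually uses is the size bound $|b_x|\le C_b\re^{-\kappa_a x}$. Read literally, $b_x\le -C_b\re^{-\kappa_a x}$ together with $b(0)\le-\eta$ forces $b\le -\eta$ on all of $[0,+\infty)$, which is incompatible with the application where $b\to 0$.

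A way to close the step is to keep $v$ on the same characteristic instead of taking its sup norm. Use $|v(s,X(s,x))|\le |v_0(x)|\le \|\rho v_0\|_\infty/\rho(x)$ with $x\ge \ell(t)$, together with $\re^{\int_s^t (b-a_x)}\le 1$. This gives, for $y\ge 0$,
\begin{equation*}
|v_x(t,y)|\le \frac{\|\rho v_0'\|_\infty + C_b A^{-1}\ln(1+At)\,\|\rho v_0\|_\infty}{\rho(\ell(t))},
\end{equation*}
that is, the rate $(1+At)^{-\alpha}$ up to a logarithm. An arbitrarily small increase of the weight exponent absorbs that logarithm.
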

\begin{proof}
	The computation 
	\begin{equation*}
		\frac{\d }{\d t} (v(t, X(t, x))) = v_t + a v_x = b(t, X(t, x)) v(t, X(t, x))
	\end{equation*}
	leads to 
	\begin{equation*}
		v(t, X(t, x)) = \re^{\int_0^t b(s, X(s, x)) \d s} v_0(x).
	\end{equation*}
	At the special value $x = X_\mathrm{inv}(t, y)$, it reads
	\begin{equation*}
		v(t, y) = \re^{\int_0^t b(s, X(s, X_\mathrm{inv}(t, y))) \d s} v_0(X_\mathrm{inv}(t, y)).
	\end{equation*}
	Multiplying and dividing by $\rho$, we obtain
	\begin{align*}
		\absolute{v(t, y)} \leq {}&{} \frac{1}{\rho(X_\mathrm{inv}(t, y))} \absolute{\rho(X_\mathrm{inv}(t, y)) v_0(X_\mathrm{inv}(t, y))},\\
		\leq {}&{} \frac{1}{\rho(X_\mathrm{inv}(t, y))} \norm{\rho v_0}_{L^\infty(\R)}.
	\end{align*}
	To further improve this estimate, we control $X_\mathrm{inv}$ from below. We rely on $a$ estimate:
	\begin{equation*}
		\partial_t X(t, x) \leq -C_a \re^{-\kappa_a X(t, x)}
	\end{equation*}	
	Multiplying both side by an exponential we see the time derivative of an exponential. Integrating reads
	\begin{equation*}
		\re^{\kappa_a X(t, x)} - \re^{\kappa_a x} \leq - C_a \kappa_a t
	\end{equation*}
	Inverting this formula leads to 
	\begin{equation*}
		x \geq \frac{1}{\kappa_a} \ln(\re^{\kappa_a X(t,x)} + C_a \kappa_a t),
	\end{equation*}
	in particular with $x = X_\mathrm{inv}(t, y)$ we obtain for $y\geq 0$
	\begin{align*}
		X_\mathrm{inv}(t, y) \geq {}&{} \frac{1}{\kappa_a} \ln (\re^{\kappa_a y} + C_a \kappa_a t)\\
		\geq {}&{} \frac{1}{\kappa_a} \ln (1 + C_a \kappa_a t).
	\end{align*}
	Since $\rho$ is non-decreasing, the two estimates combine and give the claimed $L^\infty([0, +\infty))$ estimate. 
	
	We still need to control $v$ on $(-\infty, 0]$. There, the space time domain $(-\infty, 0] \times [0, +\infty)$ is decomposed into two parts: The region where the solution originate from the initial data $v_0$ and the region where it originate from the boundary condition $x = 0$. In the latter, $b\leq 0$ guarantees no growth, and the boundary value itself decays from the previous discussion. In the former, $b\leq -\eta$ guarantees exponential temporal decay, which is controled by $\frac{1}{\rho}$.
		
	To control the first space derivative, we differentiate \eqref{e:linear_problem}.
	\begin{equation*}
		(v_x)_t + a (v_x)_x = (b - a_x) v_x + b_x v.
	\end{equation*}
	Denoting $\mathcal{S}_{a,b}(t,0) v_0$ the solution to \eqref{e:linear_problem}, the variational formula writes
	\begin{equation*}
		v_x = \mathcal{S}_{a, b-a_x}(t,0) v_0 + \int_0^t \mathcal{S}_{a, b-a_x}(t,s) (b_x(s) v(s)) \d s.
	\end{equation*}
	Since $b - a_x \leq 0$, hypothesis \eqref{e:hypothesis_linear_coefficients} is satisfied by $a$ and $b - a_x$. We can apply the above proof to get 
	\begin{equation*}
		\norm{\mathcal{S}_{a, b-a_x} (t,0) v_0}_{L^\infty(\R)} \leq \frac{1}{\rho\left(\frac{\ln (1 + C_a \kappa_a t)}{\kappa_a}\right)} \norm{\rho v_0}_{L^\infty(\R)}
	\end{equation*}
	allowing to estimate the first term in the right hand side of the variational formula. 
	To estimate the second term, we rely the above discussion. For any sub-exponential weight $\rho$
	\begin{align*}
		\int_0^t \norm{\mathcal{S}_{a, b-a_x}(t,s) (b_x(s) v(s))}_{L^\infty([0,+\infty))} \d s \leq {}&{} \int_0^{t} b_x\left(\frac{1}{\kappa_a} \ln(1 + C_a \kappa_a (t-s))\right) \norm{v(s)}_{\R} \d s \\
		\leq {}&{} \int_0^{t} \frac{C_b}{1 + \kappa_a C_a (t-s)} \frac{\norm{\rho v_0}_{L^\infty(\R)}}{\rho(\frac{1}{\kappa}\ln(1 + C_a b_a s))} \d s.
	\end{align*}
	Specifying to $\rho = \re^{\kappa_a \alpha \cdot}$ and using a classical formula involving the Gamma function, we obtain
	\begin{align*}
		\int_0^t \norm{\mathcal{S}_{a, b-a_x}(t,s) (b_x(s) v(s))}_{L^\infty([0,+\infty))} \d s \leq {}&{} \int_0^{t} \frac{C_b}{1 + \kappa_a C_a (t-s)} \frac{\norm{(1 + \re^{\kappa_a \alpha \cdot}) v_0}_{L^{\infty}(\R)}}{(1 + \kappa_a C_a s)^\alpha} \d s \\
		\leq {}&{} C \frac{\norm{(1 + \re^{\kappa_a \alpha \cdot}) v_0}_{L^{\infty}(\R)}}{(1 + \kappa_a C_a t)^\alpha}.
	\end{align*}
	We estimated both terms of the variational formula, the proof is complete.
\end{proof}

\section{Nonlinear equation}
\label{s:nonlinear_equation}

\begin{definition}
	We use the following notation for second order Taylor expansions:
	\begin{equation*}
		T_{2, F}(a, b) = F(a + b) - F(a) - F'(a)b.
	\end{equation*}
\end{definition}

Design of the nonlinear iteration scheme is subtle for this problem. Inserting a perturbative ansatz $u(t, x) = p(x - ct) + v(t, x - ct)$ into \eqref{e:main}, we obtain the perturbation dynamic. The first approach is to write it as 
\begin{equation}
	\label{e:unweighted_dynamics}
	v_t = \mathcal{A}(x) v + N_1(x,v)
\end{equation}
with coefficients and nonlinearity defined as follow.
\begin{align*}
	\mathcal{A}(x) = {}&{} -(f'(p) - c) \partial_x + g'(p) - p'f''(p)\\
	N_1(x,v) = {}&{} T_{2, g}(p(x), v) - \left(T_{2, f}(p(x), v)\right)_x
\end{align*}
Unfortunately, the estimate $\norm{N_1(v)}_{W^{1, \infty}} \leq C \norm{v}_{W^{1, \infty}} \norm{v}_{W^{2, \infty}}$ prevent to close the argument.

Instead, we consider a $v$ dependent transport coefficients. Only terms of the form $a(v) \partial_x v$ can be treated this way. It reads
\begin{equation*}
	v_t + \big(f'(p+v) - c\big)v_x = (g'(p) - f''(p))v + N_2(x, v)
\end{equation*}
with 
\begin{equation}
	\label{e:def_N_2}
	N_2(\cdot, v) \deq T_{2,g}(p,v) - T_{2,f'}(p,v) p'
\end{equation}

\subsection{Weight construction}

In order to apply linear estimates, we first need to remove the temporal growth induced by linear terms. The present discussion is made more complex by the fact that transport coefficient is $v$-dependent. We start with a simplified statement that will not be used in the main proof.

\begin{lemma}
	There exists $\omega:[0,+\infty) \times \R \to (0,1]$ and positive constants $a_0$, $\eta$, such that 
	\begin{equation*}
		\frac{1}{\omega(t,x)} \mathcal{A}(t,x) (\omega(t,x) \cdot) \deq -a(t,x) \partial_x + b(t,x)
	\end{equation*}
	satisfies for all $t \geq 0$ and $x \geq 0$:
	\begin{align*}
		a(x) \leq -\absolute{f''(u_+)} \frac{C_p}{2} \re^{-\kappa x} 
		\hspace{4em}
		b(x) \leq 0,
	\end{align*}
	while $\forall x \leq 0$
	\begin{align*}
		a(x) \leq -a_0 
		\hspace{4em}
		b(x) \leq -\eta,
	\end{align*}
	Furthermore for all $x \geq 0$
	\begin{align*}
		b - a_x (t,x) \leq 0, 
		\hspace{4em}
		b_x (t,x) \leq - C_b \re^{- \kappa x},
	\end{align*}
	while $\forall x \leq 0$
	\begin{align*}
		b - a_x (t,x) \leq - \eta, 
	\end{align*}
	Furthermore, there exists $C_\omega > 0$ such that 
	\begin{equation*}
		\omega(x) = (C_\omega + \mathrm{o}_{x \to +\infty}(1)) \re^{-\kappa x}, 
		\hspace{4em}
		\omega(x) \longrightarrow_{x\to -\infty} 1.
	\end{equation*}
\end{lemma}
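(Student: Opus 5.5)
The plan is to build $\omega$ explicitly from its logarithmic derivative, and to read off $a$ and $b$ from the conjugation. Write $\omega'/\omega = -\theta$ with $\theta$ to be chosen. A direct computation gives
\begin{equation*}
	\frac{1}{\omega}\mathcal{A}(\omega\,\cdot) = -(f'(p)-c)\partial_x + g'(p) - p'f''(p) + (f'(p)-c)\theta ,
\end{equation*}
so that
\begin{equation*}
	a = f'(p) - c, \qquad b = g'(p) - p'f''(p) + a\,\theta .
\end{equation*}
Here $a$ does not depend on $\omega$, and $b$ is affine in $\theta$. Throughout I use the freedom to translate the profile, $p(\cdot) \mapsto p(\cdot + x_1)$ with $x_1$ large, so that every asymptotic statement valid for $x \geq x_1$ becomes valid for $x \geq 0$. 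I also take $x_0$ in Assumption \ref{a:sign_f_g} to be $0$ after this shift.

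\textbf{Transport coefficient.} Assumption \ref{a:speed} gives $a<0$ on $\R$. As $x \to -\infty$ we have $a \to f'(u_-) - c < 0$, which yields $a \leq -a_0$ on $(-\infty,0]$. At $+\infty$, Taylor expansion, \eqref{e:profile_asymptotics} and Lemma \ref{l:equilibria_ordering} give
\begin{equation*}
	a = f''(u_+)(p-u_+) + \mathcal{O}(e^{-2\kappa x}) = -\absolute{f''(u_+)}\, C_p\, e^{-\kappa x}\,(1 + o(1)),
\end{equation*}
hence $a \leq -\absolute{f''(u_+)} \frac{C_p}{2} e^{-\kappa x}$ after the shift. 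The same expansion, differentiated using $p' = F(p)$, gives $a_x = \kappa \absolute{f''(u_+)} C_p e^{-\kappa x}(1+o(1))$. I record this because it enters $b - a_x$.

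\textbf{Choice of $\theta$.} Near $+\infty$, the expansions $g'(p) = g''(u_+)(p-u_+) + \dots$ and $p' f''(p) = \frac{g''(u_+)}{2}(p-u_+) + \dots$ show that the unconjugated zeroth-order term is $\frac{g''(u_+)}{2}(p-u_+) > 0$ to leading order. Since $\kappa f''(u_+) = -g''(u_+)/2$, the choice $\theta \to \kappa$ cancels this leading term exactly. So I take $\theta = \kappa + \theta_1$, where $\theta_1$ is integrable on $[0,+\infty)$. This integrability gives $\omega = (C_\omega + o(1)) e^{-\kappa x}$.

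On $(-\infty,0]$ I take $\theta = \chi(x)\kappa$, where $\chi$ is a smooth cutoff equal to $0$ near $-\infty$, so that $\omega \to 1$ there. Then Assumption \ref{a:sign_f_g} gives
\begin{equation*}
	g'(p) - p'f'' \leq -\eta + p'f'' \leq -\eta .
\end{equation*}
Here I use that $p'f'' \leq 0$ on the left after absorbing the term $2p'f''$. Making the transition region of $\chi$ where $\absolute{a}\theta$ is small then gives $b \leq -\eta$ on $(-\infty,0]$. The same assumption, together with $a_x$ bounded, gives $b - a_x \leq -\eta$ there after possibly shrinking $\eta$.

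\textbf{The right half-line.} The correction $\theta_1$ is tuned so that the second-order terms in $b$ are controlled. Here the second point of Assumption \ref{a:sign_f_g}, $f''(p)p'>0$, supplies a favourable sign. In addition, $-a_x \sim -\kappa\absolute{f''}C_p e^{-\kappa x}$ makes $b - a_x \leq 0$ as soon as $b = o(e^{-\kappa x})$. I will choose $\theta_1 = \beta(p-u_+)$ with a constant $\beta$ and compute $b$ to order $e^{-2\kappa x}$, using $p' = F(p)$ and the second-order expansion of $F$.

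\textbf{Main obstacle.} The hardest step is the last inequality, $b_x \leq -C_b e^{-\kappa x}$ on $[0,+\infty)$, to be obtained together with $b \leq 0$. From $b = \frac{1}{\omega}\mathcal{A}\omega$ one has $b \to 0$ at $+\infty$. A strictly negative derivative bounded by $-C_be^{-\kappa x}$ then forces
\begin{equation*}
	b(x) = -\int_x^{\infty} b_x \geq \frac{C_b}{\kappa} e^{-\kappa x} > 0 .
\end{equation*}
So the two sign requirements are compatible only if the decay of $b$ at $+\infty$ is not to $0$. I would first try to exploit the freedom in $\theta_1$ and the order-$e^{-\kappa x}$ term of $b$ to fix the sign of $b_x$. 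Before committing, I would check carefully the limit of $b$ at $+\infty$ implied by the asymptotics of $\omega$. I expect this consistency check, rather than the expansions themselves, to decide whether the stated one-sided bound can be achieved or must be replaced by the bound actually needed in Proposition \ref{p:linear_decay_with_weight_loss}.
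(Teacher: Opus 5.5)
Your closing consistency check is right, and you should have concluded it rather than leaving it open. Take any admissible $\omega$. On $[0,+\infty)$, the requirements $b\le 0$ and $b_x\le -C_b\re^{-\kappa x}<0$ make $b$ decrease to some limit $L\in[-\infty,0]$. Write $G\deq g'(p)-p'f''(p)$. Your identity $b=G+a\theta$ gives $\theta=(b-G)/a$. If $L<0$, then since $G\to0$ and $\absolute{a}\sim\absolute{f''(u_+)}C_p\re^{-\kappa x}$, $\theta$ grows at least like $\re^{\kappa x}$. Then $\omega$ decays doubly exponentially, contradicting $\omega\sim C_\omega\re^{-\kappa x}$. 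Hence $L=0$, and then $b(x)\ge\frac{C_b}{\kappa}\re^{-\kappa x}>0$, which is absurd.

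So the one-sided bound with $C_b>0$ cannot hold for any weight. The usable statement is $\absolute{b_x}\le C_b\re^{-\kappa x}$. This is all Proposition \ref{p:linear_decay_with_weight_loss} uses, since $b_x$ only enters there as the coefficient of a source term. It is also all the paper's proof actually shows. Its weight yields $b=-G_-$, which vanishes identically near $+\infty$ because $G\approx\frac{g''(u_+)}{2}(p-u_+)>0$ there. The proof itself only computes $G_x=\mathcal{O}(\re^{-\kappa x})$.

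The genuine gaps are in your construction itself. First, it is never completed: you do not fix $\theta_1$, and you do not check $b\le0$ or $b-a_x\le0$ on $[0,+\infty)$. The missing idea is the paper's positive-part choice $\theta=G_+/(c-f'(p))$, that is, $\omega=\exp\bigl(-\int_{-\infty}^x G_+/(c-f'(p))\,\d y\bigr)$. This choice gives the following, with no second-order expansion:
\begin{itemize}
\item $b=-G_-\le0$ on all of $\R$;
\item $\theta=0$ near $-\infty$, so $\omega\to1$;
\item $\theta=\kappa+\mathcal{O}(p-u_+)$ near $+\infty$, an integrable correction, so $\omega\sim C_\omega\re^{-\kappa x}$;
\item $b-a_x=-(\alpha-\beta)_--\beta$ with $\alpha=g'(p)$ and $\beta=f''(p)p'$. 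This is $\le-\beta<0$ where $\beta>0$, and $\le-\eta$ where $\alpha-2\beta\le-\eta$. These are exactly the two alternatives of Assumption \ref{a:sign_f_g}.
\end{itemize}

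Second, your left half-line argument fails. Assumption \ref{a:sign_f_g} provides no sign $p'f''(p)\le0$. In the model case where $f''$ has constant sign, in fact $f''(p)p'>0$ on all of $\R$. Your normalization is also inconsistent. Once $[0,+\infty)$ lies in the asymptotic regime, $g'(p)-2f''(p)p'=\mathcal{O}((p-u_+)^2)$ and $G>0$ near $x=0$. So the first point of Assumption \ref{a:sign_f_g} cannot hold up to $x_0=0$. Moreover, $\theta=\chi\kappa\le\kappa$ gives at best $b=\mathcal{O}((p-u_+)^2)$ near $0^-$, not $b\le-\eta$. It also gives no control at all on the intermediate region, which your shift has placed inside $(-\infty,0]$.

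The strict bounds on the left must come from two facts: $G\to g'(u_-)<0$, and $g'(p)-2f''(p)p'\le-\eta$ on $(-\infty,x_0]$. This forces the splitting point to lie far to the left, below $x_0$ and where $G\le-\eta$ --- the opposite of your shift. The price is that on $[0,+\infty)$ you get $a\le-C_a\re^{-\kappa x}$ only for some $C_a>0$ obtained by compactness, not with the specific constant $\absolute{f''(u_+)}C_p/2$. Again, that is all Proposition \ref{p:linear_decay_with_weight_loss} needs.
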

\begin{proof}
	Let 
	\begin{equation*}
		\omega(x) \deq \exp\left(-\int_{\infty}^x \frac{\big(g'(p(y)) - p'(y)f''(p(y))\big)_+}{c - f'(p(y))} \d y \right),
	\end{equation*}
	with $\cdot_+ = \frac{\cdot + \absolute{\cdot}}{2}$ denoting the positive part.
	Remark first that $g'(p(y)) - p'(y)f''(p(y) \longrightarrow g'(u_-) < 0$ when $y\to -\infty$. Thus the numerator is zero for all $y$ close enough to $-\infty$, and the integral converges. Incidentally $\omega(y) = 1$ for these values of $y$, and the limit $y\to -\infty$ is proven.
	
	For this choice of $\omega$, 
	\begin{equation*}
		\frac{1}{\omega(t,x)} \mathcal{A}(t,x) (\omega(t,x) \cdot) = -(f'(p(x)) - c) \partial_x - \big(g'(p(x)) - p'(x)f''(p(x))\big)_-.
	\end{equation*}
	First, $b \leq 0$ is satisfied, and  
	\begin{equation*}
		\frac{d}{d_x}\left(g'(p) - p'f''(p)\right) = p' g''(p) - p'' f''(p) - {p'}^2 f'''(p) = \mathcal{O}_{x \to +\infty}(\re^{-\kappa x}).
	\end{equation*}
	Second, assuming \ref{i:critical} we compute
	\begin{align*}
		a = {}&{} f'(p) - c, \\
		= {}&{} f''(u_+)(p - u_+) + \mathcal{O}_{p \to u_+}\big((p - u_+)^2\big), \\
		= {}&{} - \absolute{f''(u_+)} C_p \re^{-\kappa x} + \mathcal{O}_{x\to +\infty}(\re^{-2 \kappa x}).
	\end{align*}
	which leads to the claim on $a$. The sign of $b - a_x$ follows from assumption \ref{a:sign_f_g}. Remark that with $\eta\geq 0$, one has $-(\alpha - \beta)_- - \beta \leq -\eta$ when $\beta \geq \eta$ or when $\alpha - 2\beta \leq \eta$.
	
	We then estimate $\omega$ close to $+\infty$. We start with the numerator:
	\begin{align*}
		g'(p) - p'f''(p) = {}&{} g''(u_+) (p - u_+) + \mathcal{O}_{y\to +\infty}((p-u_+)^2)- F(p) f''(p), \\
		= {}&{} (g''(u_+) - \kappa f''(u_+)) (p - u_+) + \mathcal{O}_{y\to +\infty}((p - u_+)^2), \\
		= {}&{} \frac{g''(u_+)}{2} (p - u_+) + \mathcal{O}_{y\to +\infty}((p - u_+)^2),
	\end{align*}
	and continue with the denominator:
	\begin{equation*}
		c - f'(p) = f''(u_+) (p - u_+) + \mathcal{O}_{y\to +\infty}((p-u_+)^2).
	\end{equation*}
	Combined together, we compute
	\begin{equation}
		\label{e:weight_expansion}
		\frac{g'(p) - p'f''(p)}{c - f'(p)} = \frac{g''(u_+)}{2 f''(u_+)} \frac{1 + \mathcal{O}_{y\to +\infty}((p - u_+)^2)}{1 + \mathcal{O}_{y\to +\infty}((p - u_+)^2)}.
	\end{equation}
	Since the numerator is positive when $y$ is close enough to $+\infty$, adding a positive part is harmless to the expansion. It provides the claim after a straightforward integration.
\end{proof}

The correct strategy is to find a weight $\omega$ that stabilizes all linear dynamics, when $v$ runs over a small ball of $\buc{1}(\R, \R)$.

\begin{lemma}
	\label{l:def_weight}
	There exists $\omega:[0,+\infty) \times \R \to (0,1]$ and $\delta>0$ such that for every $v\in \buc{1}(\R)$ satisfying $\norm{v}_{W^{1, \infty}} \leq \delta$, the coefficients 
	\begin{equation*}
		\frac{1}{\omega} \big(c - f'(p + \omega v)\big) \partial_x \big( \omega v \big) + (g'(p) - f''(p)) \deq - a(x, v) \partial_x v + b(x, v)
	\end{equation*}
	satisfy
	\begin{equation*}
		a(x, v) \leq -\absolute{f''(u_+)} \frac{C_p}{2} \re^{-\kappa x} 
		\hspace{4em}
		b(x, v) \leq 0.
	\end{equation*}
	Furthermore, there exists $C_\omega > 0$ such that 
	\begin{equation*}
		\omega(x) = (C_\omega + \mathrm{o}_{x \to +\infty}(1)) \re^{-\kappa x}, 
		\hspace{4em}
		\omega(x) \longrightarrow_{x\to -\infty} 1.
	\end{equation*}
	Furthermore for all $x \geq 0$
	\begin{align*}
		b - a_x (t,x) \leq 0, 
		\hspace{4em}
		b_x (t,x) \leq - C_b \re^{- 2\kappa x},
	\end{align*}
	while $\forall x \leq 0$
	\begin{align*}
		b - a_x (t,x) \leq - \eta, 
	\end{align*}
\end{lemma}
\begin{proof}
	Let $C_1>0$, and 
	\begin{equation*}
		\omega(x) \deq \exp\left(\int_{-\infty}^x \frac{\big(g'(p) - f''(p)\big)_+}{\sup_{\absolute{v} \leq \delta C_1} \absolute{c - f'(u_+ + (1 + v)(p - u_+))}} \d y \right).
	\end{equation*}	
	Assuming $\delta$ is taken small enought, the expansion \eqref{e:weight_expansion} is easily adapted to the present definition with minor changes. As a consequence, $\omega(x) \leq C_2 \re^{-\kappa x}$ still holds true for a constant $C_2$ that does not change when $\delta$ is taken smaller. In particular, there exists a constant $C_{\omega, p}$ such that $\forall x \in \R$
	\begin{equation}
		\label{e:estimate_omega_p}
		\omega(x) \leq C_{\omega, p} \absolute{p - u_+}. 
	\end{equation}
	Up to setting $\delta$ to a smaller value, we can replace $C_1$ by $C_{\omega, p}$ in the definition of $\omega$:
	\begin{equation*}
		\omega(x) \deq \exp\left(\int_{-\infty}^x \frac{\big(g'(p) - f''(p)\big)_+}{\sup_{\absolute{v} \leq \delta C_{\omega, p}} \absolute{c - f'(u_+ + (1 + v)(p - u_+))}} \d y \right),
	\end{equation*}	
	and \eqref{e:estimate_omega_p} still holds true with this definition.
	
	With the statement notation, we compute
	\begin{equation*}
		a(x, v) = c - f'(p + \omega v), 
		\hspace{4em}
		b(x, v) = g'(p) - f''(p) + \frac{\omega'}{\omega} a(x, v).
	\end{equation*}
	from $v\in \buc{1}(\R)$ and \eqref{e:estimate_omega_p}, the claimed estimate on $a$ is true. To estimate $b$, we discuss the sign of $u_+ - u_-$. When positive, $f'$ is increasing on $(u_-, u_+)$. Assuming $\delta$ small enough we ensure that $p + \omega v \in (u_-, u_+)$. As a consequence, we can estimate for large enough $x$:
	\begin{equation*}
		\sup_{\absolute{v} \leq C_{\omega, p} \delta} \absolute{c - f'(u_+ + (1 + v)(p - u_+))} \leq c - f'(u_+ + (1 + \delta C_{\omega, p})(p - u_+)).	
	\end{equation*} 
	On the other hand, 
	\begin{equation*}
		c - f'(p + \omega v) \leq c - f'(u_+ p - u_+ + \delta C_{\omega, p}(p - u_+)).	
	\end{equation*} 
	The two estimate together lead to $b(x) \leq - \big(g'(p) - f''(p)\big)_- \leq 0$. When $u_+ - u_-$ is negative, the same proof structure applies with a sign change. The intermediate quantity to reach being $c - f'(u_+ + (1 - \delta C_{\omega, p})(p - u_+))$ due to $f'$ being decreasing.
	
	The claim on $b_x$ is proven with a similar Taylor expansion as in the $v$-independent case. The claim on $b - a_x$ is a combination of both the above argument to control $b$ and the $v$-independent estimate on $b - a_x$.
\end{proof}

\subsection{Weighted dynamics}

We further restrict to perturbations $v$ of the form 
\begin{equation*}
	v = \omega w
\end{equation*}
with $w \in \buc{1}(\R)$. Inserting this ansatz into \eqref{e:unweighted_dynamics}, we obtain

\begin{equation*}
	w_t + a \partial_x w = b w + \mathcal{N}(\cdot, w),
\end{equation*}
where $a$ and $b$ are defined in Lemma \ref{l:def_weight}, and 
\begin{equation*}
	\mathcal{N}(x, w) = \frac{1}{\omega(x)} N_2(x, \omega(x) w(x)).
\end{equation*}

\begin{lemma}
	There exists $C>0$ and $\delta>0$ such that for every $w\in \buc{1}(\R)$ with $\norm{w}_{W^{1, \infty}(\R)} \leq \delta$, 
	\begin{equation*}
		\Norm{\frac{1}{\omega} \mathcal{N}(\cdot, w)} \leq C \norm{w}_{W^{1, \infty}(\R)}.
	\end{equation*}
\end{lemma}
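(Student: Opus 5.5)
We reduce the bound to a pointwise estimate on $N_2$ and then use the comparison $\omega\lesssim\absolute{p'}$. Recall from \eqref{e:def_N_2} that
\begin{equation*}
	N_2(x, v) = T_{2,g}(p(x),v) - T_{2,f'}(p(x),v)\, p'(x),
\end{equation*}
a function of $(x, v(x))$ alone, with no derivative of $v$ involved; this was the purpose of moving $f'(p+v)v_x$ into the transport coefficient. Set $v = \omega w$. Since $\omega \leq 1$, we have $\norm{v}_{L^\infty} \leq \norm{w}_{L^\infty} \leq \delta$, so $p + v$ stays in a fixed compact set $K$ around $[u_-,u_+]$. Taylor's formula with integral remainder, using $f,g\in\mathcal{C}^3$, then gives
\begin{equation*}
	\absolute{T_{2,g}(p,v)} \leq \tfrac12 \sup_K \absolute{g''}\, v^2, \qquad \absolute{T_{2,f'}(p,v)} \leq \tfrac12 \sup_K \absolute{f'''}\, v^2 .
\end{equation*}
Hence, pointwise,
\begin{equation*}
	\absolute{\mathcal{N}(x,w)} = \frac{\absolute{N_2(x,\omega w)}}{\omega} \leq C\,\omega(x)\, w(x)^2 .
\end{equation*}

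The statement asks to control $\frac{1}{\omega}\mathcal{N}$, so one more factor $\omega^{-1}$ must be absorbed. The term coming from $T_{2,f'}$ is harmless once we know $\absolute{p'}\lesssim \omega$. This holds because $\absolute{p'} = \absolute{F(p)} \sim \kappa\absolute{p-u_+} \sim \kappa C_p \re^{-\kappa x}$ at $+\infty$, and $\omega \sim C_\omega \re^{-\kappa x}$ by Lemma \ref{l:def_weight}. At $-\infty$, $p'$ decays exponentially while $\omega\to 1$. On compact sets both are positive and continuous. So
\begin{equation*}
	\frac{1}{\omega^2}\absolute{T_{2,f'}(p,\omega w)\,p'} \leq C\,\absolute{p'}\,w^2 \leq C w^2 .
\end{equation*}

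The term $T_{2,g}$ is the main obstacle: the naive bound $\omega^{-2}\absolute{T_{2,g}} \leq C w^2$ is only available because the quadratic order exactly cancels $\omega^2$, and then $C\norm{w}^2 \leq C\delta\norm{w}$ gives the claim. So in $L^\infty$ the estimate closes directly:
\begin{equation*}
	\Norm{\tfrac{1}{\omega}\mathcal{N}(\cdot,w)}_{L^\infty} \leq C\norm{w}_{L^\infty}^2 \leq C\delta\norm{w}_{W^{1,\infty}(\R)} .
\end{equation*}

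If the norm is meant in $W^{1,\infty}$, we differentiate in $x$. The derivative produces terms like $g''(p+\omega w)\,(\omega w)_x$, and $\omega' = \frac{\omega'}{\omega}\,\omega$ with $\frac{\omega'}{\omega}$ bounded by the expansion \eqref{e:weight_expansion}. Each resulting term is again of the form $\omega^2 \times (\text{bounded}) \times w\,(\absolute{w}+\absolute{w_x})$. Here $f,g\in\mathcal{C}^3$ is used to keep third derivatives bounded on $K$. The factor $\omega^{-2}$ then cancels, and smallness of $\delta$ yields the linear bound with constant $C$ uniform in $w$.
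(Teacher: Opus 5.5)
Your argument is correct and is essentially the paper's proof, which just says the bound is ``direct from'' \eqref{e:def_N_2}. The idea in both is that Taylor's formula makes $N_2(x,\omega w)$ quadratic in $\omega w$, so $\omega^{-2}N_2(x,\omega w)=\mathcal{O}(w^2)$, and smallness of $\delta$ turns this into the linear bound. For the $L^\infty$ part, boundedness of $p'$ already suffices, so the comparison $\absolute{p'}\lesssim\omega$ is not needed there.

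One correction in the $W^{1,\infty}$ step. Differentiating $T_{2,f'}(p,\omega w)\,p'$ produces ${p'}^2\,T_{2,f''}(p,\omega w)$. This term is $\mathcal{O}(\omega^2w^2)$ when $f'''$ is Lipschitz, but with only $f\in\mathcal{C}^3$ it is merely $o(\omega\absolute{w})\,{p'}^2$. It therefore does not have the quadratic form you assert. It still gives the linear bound $C\norm{w}_{W^{1,\infty}}$ that the lemma asks for, because ${p'}^2/\omega$ is bounded, and that is where your comparison $\absolute{p'}\lesssim\omega$ is actually useful.
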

\begin{proof}
	Direct from $N_2$ expression \eqref{e:def_N_2}.
\end{proof}

To prove temporal decay of the weighted perturbation, we introduce the shape function
\begin{equation*}
	\Theta(t) = \sup_{s\in [0, t]} (1 + s)^\alpha \norm{(1 + \re^{\kappa \alpha \cdot}) w(s)}_{W^{1, \infty}(\R)}
\end{equation*}

By short time existence theory \cite{Kruzkov_70}, $\Theta$ is continuous. We now prove it is bounded from above when the initial data is small enough.

\begin{proposition}
	Let $\alpha > 0$ and $E_0 = \norm{(1 + \re^{\kappa \alpha \cdot}) w_0}$. There exists $C>0$ such that 
	\begin{equation}
		\label{e:Theta_estimate}
		\Theta(t) \leq C E_0 + C \Theta(t)^2.
	\end{equation}
\end{proposition}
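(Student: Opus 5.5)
The plan is a Duhamel argument for the weighted equation $w_t + a(x,w)\partial_x w = b(x,w) w + \mathcal{N}(\cdot,w)$. I freeze the nonlinear coefficients along the solution itself: $\tilde a(t,x) \deq a(x, w(t,x))$ and $\tilde b(t,x) \deq b(x,w(t,x))$. Then $w$ solves the linear, time-dependent problem \eqref{e:linear_problem} with coefficients $(\tilde a, \tilde b)$ and source $\mathcal{N}(\cdot,w)$. I work on the time interval where $\Theta \leq \delta$, with $\delta$ from Lemma \ref{l:def_weight}. This is legitimate because $\norm{w(s)}_{W^{1,\infty}} \leq \Theta(s)$ and $\Theta$ is continuous, so a bootstrap can later extend the interval.

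On that interval, Lemma \ref{l:def_weight} shows that $\tilde a, \tilde b$ satisfy every hypothesis of Proposition \ref{p:linear_decay_with_weight_loss}, uniformly in $t$:
\begin{itemize}
\item $\tilde a \leq -C_a \re^{-\kappa x}$ and $\tilde b \leq 0$ for $x \geq 0$;
\item $\tilde a \leq -a_0$ and $\tilde b \leq -\eta$ for $x \leq 0$;
\item the sign conditions on $\tilde b - \tilde a_x$ and $\tilde b_x$ hold.
\end{itemize}
One point needs checking. Now $\tilde a_x$ and $\tilde b_x$ contain terms like $\partial_w a \, w_x$. These are bounded by $\delta$ times a quantity that carries the factor $\omega$, hence is $O(\re^{-\kappa x})$. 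For $\delta$ small they can therefore be absorbed into the strict inequalities. Writing $\mathcal{S}(t,s)$ for the associated evolution system, Duhamel gives
\begin{equation*}
w(t) = \mathcal{S}(t,0) w_0 + \int_0^t \mathcal{S}(t,s)\, \mathcal{N}(\cdot, w(s)) \, \d s .
\end{equation*}
The second part of Proposition \ref{p:linear_decay_with_weight_loss} bounds the first term by $C(1+t)^{-\alpha} E_0$. Multiplying by $(1+t)^\alpha$ produces the $C E_0$ contribution.

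For the Duhamel integral I need the nonlinearity to be quadratic and weighted. Since $N_2$ in \eqref{e:def_N_2} consists only of second-order Taylor remainders of $g$ and $f'$ at $p$, the pointwise bound is
\begin{equation*}
\absolute{\mathcal{N}(x,w)} = \frac{1}{\omega}\absolute{N_2(x,\omega w)} \leq C \omega(x) \absolute{w}^2 .
\end{equation*}
The $x$-derivative has the analogous bound $C\omega (\absolute{w} + \absolute{w_x}) \absolute{w}$, using $\omega' = O(\omega)$ and $p' = O(\omega)$. Consequently
\begin{equation*}
\norm{(1+\re^{\kappa\alpha\cdot})\mathcal{N}(w(s))}_{W^{1,\infty}} \leq C \norm{w(s)}_{W^{1,\infty}} \norm{(1+\re^{\kappa\alpha\cdot}) w(s)}_{W^{1,\infty}} \leq C (1+s)^{-2\alpha} \Theta(t)^2 .
\end{equation*}
Applying Proposition \ref{p:linear_decay_with_weight_loss} to $\mathcal{S}(t,s)$ (the same constants hold for every starting time $s$) gives
\begin{equation*}
\int_0^t C (1+t-s)^{-\alpha} (1+s)^{-2\alpha} \Theta(t)^2 \, \d s .
\end{equation*}

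The main obstacle is this convolution. It is bounded by $C(1+t)^{-\alpha}$ only when $2\alpha > 1$; for $\alpha \leq 1/2$, or $\alpha$ near $1/2$, it only gives $(1+t)^{1-3\alpha}$ up to logarithms. I see two ways to handle it.
\begin{itemize}
\item First, exploit the extra factor $\omega$ in $\mathcal{N}$ rather than spending it. The source is localized like $\re^{-\kappa x}$ beyond what $\Theta$ requires, so I would run Proposition \ref{p:linear_decay_with_weight_loss} with weight $\re^{\kappa(\alpha+1)\cdot}$ on the source. That yields $(1+t-s)^{-(\alpha+1)}$ inside the integral, which is integrable, and the convolution estimate then closes for every $\alpha>0$.
\item If the loss in the time variable does not transfer cleanly that way, I would instead restrict to $\alpha>1/2$ and recover smaller $\alpha$ by interpolation, since the weighted norm is monotone in $\alpha$.
\end{itemize}
Either route gives $(1+t)^\alpha \norm{(1+\re^{\kappa\alpha\cdot}) w(t)}_{W^{1,\infty}} \leq C E_0 + C\Theta(t)^2$. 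Taking the supremum over $[0,t]$ proves \eqref{e:Theta_estimate}.
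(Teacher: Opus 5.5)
There is a genuine gap, and the paper's own proof has the same one. Your overall architecture matches the paper's: Duhamel, Proposition~\ref{p:linear_decay_with_weight_loss} applied uniformly in the initial time, the bounds $\absolute{\mathcal{N}}\lesssim\omega\absolute{w}^2$ and $\absolute{\partial_x\mathcal{N}}\lesssim\omega\absolute{w}(\absolute{w}+\absolute{w_x})$, and spending the extra factor $\omega$ to improve the time kernel (the paper does this through $\norm{\omega^{-1}\mathcal{N}}$). Your pointwise and convolution computations are correct.

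\textbf{The gap.} Proposition~\ref{p:linear_decay_with_weight_loss} trades the weight on the data for decay of the \emph{unweighted} norm. It bounds $\norm{\mathcal{S}(t,s)f}_{W^{1,\infty}}$, never $\norm{(1+\re^{\kappa\alpha\cdot})\mathcal{S}(t,s)f}_{W^{1,\infty}}$. Your Duhamel argument therefore yields only $(1+t)^\alpha\norm{w(t)}_{W^{1,\infty}}\leq CE_0+C\Theta(t)^2$, while $\Theta$ is built on the weighted norm. So the step ``multiplying by $(1+t)^\alpha$ produces the $CE_0$ contribution'' and your last display do not follow. The paper likewise bounds only $\norm{w(t)}_{W^{1,\infty}}$ and then asserts the claim.

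This is not a presentational slip. Since $\tilde a,\tilde b=\mathcal{O}(\re^{-\kappa x})$ as $x\to+\infty$ and $\absolute{\mathcal{N}}\lesssim\delta\omega\absolute{w}$, integrating along characteristics gives, for $y\geq0$, $\absolute{w(t,y)}\geq\re^{-Ct\re^{-\kappa y}}\absolute{w_0(X_{\mathrm{inv}}(t,y))}$ with $\re^{\kappa X_{\mathrm{inv}}(t,y)}\leq\re^{\kappa y}+Ct$. Take data with $\re^{\kappa\alpha x}w_0(x)\to\ell\neq0$. Then the weighted norm stays $\geq\absolute{\ell}$ for all times, so $\Theta(t)\geq(1+t)^\alpha\absolute{\ell}$. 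The continuous function $\Theta$ starts at $\Theta(0)=E_0$, so for small $E_0$ it must cross the interval where $CE_0+C\Theta^2<\Theta$. Because the transport speed vanishes at $+\infty$, no decay of the weighted norm can be extracted from $E_0$ alone.

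\textbf{What the scheme does prove.} It gives the inequality for the unweighted shape function $\Theta_0(t)=\sup_{s\in[0,t]}(1+s)^\alpha\norm{w(s)}_{W^{1,\infty}}$, which is what the paper's displayed estimates actually control. With $\Theta_0$ you no longer have $\norm{(1+\re^{\kappa\alpha\cdot})w(s)}$ available. The source can then carry only the one factor $\omega$ it intrinsically has: $\norm{(1+\re^{\kappa\cdot})\mathcal{N}(w(s))}_{W^{1,\infty}}\lesssim\norm{w(s)}_{W^{1,\infty}}^2\leq(1+s)^{-2\alpha}\Theta_0(t)^2$. Proposition~\ref{p:linear_decay_with_weight_loss} with exponent $1$ gives the kernel $(1+t-s)^{-1}$. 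Since $\int_0^t(1+t-s)^{-1}(1+s)^{-2\alpha}\,\d s\lesssim(1+t)^{-\alpha}$ for $0<\alpha\leq1$, this closes $\Theta_0\leq CE_0+C\Theta_0^2$ in that range.

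Your weight $\re^{\kappa(\alpha+1)\cdot}$ on the source would reach every $\alpha>0$. However, it needs a second bootstrap quantity $\sup_{s}\norm{(1+\re^{\kappa\alpha\cdot})w(s)}_{W^{1,\infty}}$, bounded but not decaying, together with weighted-to-weighted linear bounds. Proposition~\ref{p:linear_decay_with_weight_loss} does not provide these.

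\textbf{The interpolation fallback.} Proving $\alpha>1/2$ and interpolating down fails on its own, because monotonicity in $\alpha$ runs the wrong way. Smallness of $\norm{(1+\re^{\kappa\alpha'\cdot})w_0}$ for $\alpha'<1/2$ gives no information on the $\alpha$-weighted norm for $\alpha>1/2$, which may be infinite.
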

\begin{proof}
	We rely on the Duhamel formula
	\begin{equation*}
		w(t) = \mathcal{S}_{a, b}(t, 0) w_0 + \int_0^t \mathcal{S}_{a, b}(t,s) \mathcal{N}(\cdot, w(s)) \d s.
	\end{equation*}
	Applying proposition \ref{p:linear_decay_with_weight_loss}, we obtain
	\begin{align*}
		\norm{w(t)}_{W^{1, \infty}(\R)} \leq {}&{} \frac{C E_0}{(1 + \kappa C_a t)^\alpha} + C \int_0^t \frac{1}{1 + \kappa C_a (t - s)} \norm{\frac{1}{\omega}\mathcal{N}(w)}_{W^{1, \infty}} \d s \\
		\leq {}&{} \frac{C E_0}{(1 + \kappa C_a t)^\alpha} + C \int_0^t \frac{1}{1 + \kappa C_a (t - s)} \frac{\Theta(s)^2}{(1 + \kappa C_a s)^{2\alpha}} \d s \\
		\leq {}&{} \frac{C E_0}{(1 + \kappa C_a t)^\alpha} + \Theta(t)^2 \int_0^t \frac{1}{(1 + \kappa C_a (t - s))^{1 - \alpha}} \frac{1}{(1 + \kappa C_a s)^{2\alpha}} \d s.
	\end{align*}
	Using the same Gamma function formula as previously, we obtain
	\begin{equation*}
		(1 + \kappa C_a t) \norm{w(t)}_{W^{1, \infty}(\R)} \leq C E_0 + \Theta(t)^2.
	\end{equation*}
	Which leads to the claim and complete the proof.
\end{proof}

\begin{proof}[Proof -- Theorem \ref{t:stability}]
	From inequality \eqref{e:Theta_estimate}, it is standard to deduce that $\Theta(t) \leq 2 C E_0$ when $E_0$ is small enough. We describe the main line of the argument for completeness. Consider the map $\gamma(x) = CE_0 + Cx^2 - x$. Inequality \eqref{e:Theta_estimate} states that $\gamma(\Theta)$ is non-negative. It is also continuous. A quick sign study of $\gamma$ reveals that it is positive on a region of the form $[0, d_1] \cup [d_2, +\infty)$
	with $d_1 < 2CE_0 < d_2$. Assuming $\Theta(0) \leq 2C E_0$, we deduce that $\Theta(0) \leq d_1$, and that $\Theta(t) \leq d_1$ for all times. Otherwise, there would exists $t_0>0$ such that $\gamma(\Theta(t_0)) = \gamma(2C E_0) < 0$.
\end{proof}

\bibliographystyle{alpha}
\bibliography{./critical_balance_laws.bib}

\end{document}